\newtheorem{theorem}{Theorem}[section]
\newtheorem{lemma}[theorem]{Lemma}
\newtheorem{proposition}[theorem]{Proposition}
\newtheorem{corollary}[theorem]{Corollary}
\theoremstyle{definition}
\newtheorem{definition}[theorem]{Definition}
\theoremstyle{remark}
\numberwithin{equation}{section}
\begin{document}
 
\title[Inequalities in quantum information theory]{Noncommutative versions of inequalities in quantum information theory}

\author{Ali Dadkhah}
\address{Department of Pure Mathematics, Ferdowsi University of Mashhad, P. O. Box 1159, Mashhad 91775, Iran}
\email{dadkhah61@yahoo.com}
 
\author{ Mohammad Sal Moslehian}
\address{Department of Pure Mathematics, Center Of Excellence in Analysis on Algebraic Structures (CEAAS), Ferdowsi University of Mashhad, P. O. Box 1159, Mashhad 91775, Iran}
\email{moslehian@um.ac.ir}
 
\author{Kenjiro Yanagi}
\address{Department of Mathematics, Josai University, Sakado, 350-0295, Japan}
\email{yanagi@josai.ac.jp}
 
\subjclass[2010]{Primary 46L05, 47A63; Secondary 81P15}
\keywords{tracial positive linear map; Wigner--Yanase skew information;  covariance; correlation; uncertainty relation.}

\begin{abstract} 
In this paper, we aim to replace in the definitions of covariance and correlation the usual trace {\rm Tr} by a tracial positive map between unital $C^*$-algebras and to replace the functions $x^{\alpha}$ and $x^{1-\alpha}$ by functions $f$ and $g$ satisfying some mild conditions. These allow us to define the generalized covariance, the generalized variance, the generalized correlation and the generalized Wigner--Yanase--Dyson skew information related to the tracial positive maps and functions $f$ and $g$. We present a generalization of Heisenberg's uncertainty relation in the noncommutative framework. 
We extend some inequalities and properties for the generalized correlation and the generalized Wigner--Yanase--Dyson skew information. Furthermore, we extend some inequalities for the generalized skew information such as uncertainty relation and the relation between the generalized variance and the generalized skew information.
\end{abstract}

\maketitle
%%%%%%%%%%%%%%%%%%%%%%%%%%%%%%%%%%%%%%%%%%
%%%%%%%%%%%%%%%%%%%%%%%%%%%%%%%%%%%%%%%%%% section 1
%%%%%%%%%%%%%%%%%%%%%%%%%%%%%%%%%%%%%%%%%%
\section{Introduction}
In quantum information theory, the classical expectation value of
an observable (self-adjoint element) $A$ in a quantum state
(density element) $\rho$ is defined by ${\rm Tr}(\rho A)$, and the classical variance
 is expressed by
$ {\rm Var}_\rho(A) := {\rm Tr}(\rho A^2) -({\rm Tr}(\rho A))^2$.
The Heisenberg uncertainty relation
\cite{fried,yoo} states that
\begin{eqnarray}\label{heis1}
{\rm Var}_{\rho}(A){\rm Var}_{\rho}(B) \geq \dfrac{1}{4} \left|{\rm Tr}\left(\rho [A,B]\right)\right|^2,
\end{eqnarray}
where $\rho$ is a quantum state and $A$ and $B$ are two observables. The Heisenberg uncertainty relation gives a fundamental limit for the measurements of incompatible observables. A refinement of the Heisenberg uncertainty relation due to  Schr\"{o}dinger \cite{schro} is given by
\begin{eqnarray}\label{schr}
{\rm Var}_{\rho}(A){\rm Var}_{\rho}(B) -\left|{\rm Re} ({\rm Cov}_{\rho}(A,B))\right|^2\geq \frac{1}{4} \left|{\rm Tr}\left(\rho [A,B]\right)\right|^2,
\end{eqnarray}
where $[A,B]:=AB-BA$ is the commutator of $A$ and $B$ and the classical covariance ${\rm Cov}_{\rho}(A,B)$ of $A$ and $B$ is defined by ${\rm Cov}_{\rho}(A,B) := {\rm Tr}(\rho AB) -{\rm Tr}(\rho A) {\rm Tr}(\rho B)$.

The third author, Furuichi, and Kuriyama \cite{yangi} defined the one-parameter correlation and the one-parameter Wigner--Yanase skew information (is known as the Wigner--Yanase--Dyson skew information; see \cite{modad2,lib}) for elements $A$ and $B$, respectively, as follows:
\begin{eqnarray*}
{\rm Corr}_{\rho}^{\alpha}(A,B):={\rm Tr}(\rho A^*B)-{\rm Tr}(\rho^{1-\alpha} A^*\rho^{ \alpha}B) \mbox{\quad and \quad} {\rm I}_{\rho}^{\alpha}(A):={\rm Corr}_{\rho}^{\alpha}(A,A),
\end{eqnarray*}
where $\alpha \in [0,1]$. They showed a trace inequality representing the relation between these two quantities as
\begin{eqnarray}\label{semic}
\left| {\rm Re(Corr}_\rho^{\alpha} (A,B))\right|^2 \leq {\rm I}_{\rho}^\alpha (A){\rm I}_{\rho}^\alpha (B).
\end{eqnarray}
In the case that $\alpha =\frac{1}{2}$, we get the classical notions of the correlation ${\rm Corr}_{\rho}(A,B)$ and the Wigner--Yanase skew information ${\rm I}_{\rho}(A)$.

In this paper, we aim to replace the usual trace {\rm Tr} by a tracial positive map between unital $C^*$-algebras and to replace the functions $x^\alpha$ and $x^{1-\alpha}$ by functions $f$ and $g$ under certain conditions. These allow us to define the generalized covariance, the generalized variance, the generalized correlation and the generalized Wigner--Yanase--Dyson skew information related to the tracial positive maps and functions $f$ and $g$. \\
The rest of the paper is organized as follows. In the next section, we provide some preliminaries and background material. In Section \ref{sec3}, we use some techniques in the noncommutative setting to give some Cauchy--Schwarz type inequalities for the generalized covariance and the generalized variance. Then we use them to extend inequalities (\ref{heis1}) and (\ref{schr}) for tracial positive linear maps between $C^*$-algebras. In Section \ref{sec4}, we present some inequalities and properties for the generalized correlation and the generalized Wigner--Yanase--Dyson skew information. In this section, we give a generalization of inequality (\ref{semic}) for tracial positive linear maps between $C^*$-algebras. Finally, in Section \ref{sec5}, we establish some inequalities between variance and Wigner--Yanase--Dyson skew information. We indeed apply some arguments differing from the classical theory to investigate inequalities related to the generalized covariance, the generalized variance, the generalized correlation, and the generalized Wigner--Yanase--Dyson skew information.
%%%%%%%%%%%%%%%%%%%%%%%%%%%%%%%%%%%%%%%%%%
%%%%%%%%%%%%%%%%%%%%%%%%%%%%%%%%%%%%%%%%%% section 2
%%%%%%%%%%%%%%%%%%%%%%%%%%%%%%%%%%%%%%%%%%
\section{Preliminaries}
Let us fix our notation and terminology used throughout the paper. Let $\mathbb{B}(\mathcal{H})$ stand for  the $C^*$-algebra of all bounded linear operators on a complex Hilbert space $(\mathcal H, \langle\cdot,\cdot\rangle)$ with the unit $I$. An operator $A$ is called positive if $\langle Ax, x\rangle\geq 0 $ for all $x \in \mathcal{H}$, and we then write $A\geq 0$. An operator A is said to be strictly positive (denoted by $ A >0$) if it is a positive invertible operator.  Let $\leq$ be the L\"{o}wner order on the self-adjoint part of B(H). In the case that  $\mathcal{H}=\mathbb{C}^n$,  $\mathbb{B}(\mathbb{C}^n)$ is the same as  the matrix algebra $M_n(\mathbb{C})$ consist of all $n\times n$ complex matrices. Due to the Gelfand--Naimark--Segal theorem, every $C^*$-algebra can be regarded as a $C^*$-subalgebra of $\mathbb{B}(\mathcal{H})$ for some Hilbert space $\mathcal{H}$. We use $\mathcal{A},\mathcal{B}, \dots $ to denote $C^*$-algebras. We denote by ${\rm Re}(A)$ and ${\rm Im}(A)$ the real and imaginary parts of $A$, respectively. The self-adjoint part  of $\mathcal{A}$ is denoted by $\mathcal{A}_h$.
A linear map $\Phi :\mathcal{A}\to \mathcal{B}$ between $C^*$-algebras is said to be $*$-linear, if $\Phi(A^*)=\Phi(A)^*$. It is positive, if $\Phi(A)\geq 0$ whenever $A \geq 0$.
We say that $\Phi$ is unital if $\mathcal{A}$ and $ \mathcal{B}$ are unital and $\Phi$ preserves the unit. A linear map $\Phi$ is called $n$-positive if the induced map $\Phi_n : M_n(\mathcal{A})\to M_n(\mathcal{B})$ given by $\Phi_n\left([a_{ij}]\right) = \left[\Phi(a_{ij})\right]$ is positive, where $M_n(\mathcal{A})$ is the $C^*$-algebra of
$n \times n$ matrices with entries in $\mathcal{A}$. If  $\Phi$ is $n$-positive for every $n\in \mathbb{N}$, then $\Phi$ is called completely positive. It is known that if the range of a positive linear map $\Phi$  is commutative, then $\Phi$ is completely positive; see \cite[Theorem 1.2.4]{stormer}.\\
A map $\Phi$ is called tracial if $\Phi(AB)=\Phi(BA)$ for all $A$ and $B$ in the domain of $\Phi$. The usual trace on the trace class operators acting on a Hilbert space is a tracial positive linear functional. It is known that every tracial positive map between $C^*$-algebras is completely positive; see \cite [page 57]{choi}. For a given closed two sided ideal $\mathcal{I}$ of a $C^*$-algebra $\mathcal{A}$, the commutativity of the quotient $\mathcal{A}/\mathcal{I}$  is equivalent to the existence of a tracial positive linear map $\Phi: \mathcal{A}\to \mathcal{A}$ satisfying $\Phi(\Phi(A))=\Phi(A)$ and $\Phi(A)-A\in\mathcal{I}$; see \cite{choi} for more examples and implications of the definition. For a tracial positive linear map $\Phi$, a positive element $\rho \in \mathcal{A}$ is said to be a $\Phi$-density operator if $\Phi(\rho)=I$. A unital $C^*$-algebra $\mathcal{B}$ is said to be injective whenever for every unital $C^*$-algebra $\mathcal{A}$ and for every self-adjoint subspace $S$ of $\mathcal{A}$, each unital completely positive linear map from $S$ into $\mathcal{B}$, can be extended to a completely positive linear map from $\mathcal{A}$ into $\mathcal{B}$.\\
A pair $(f,g)$ of continuous real-valued functions defined on a set $D$ is called same monotonic if $(f(x)-f(y)) (g(x)-g(y)) \geq 0$ for every $x,y \in D$. Bourin \cite{bur1} and Fujii \cite{jfuji} showed that ${\rm tr}(f(\rho) g(\rho) A^2) - {\rm tr}(f(\rho) A g(\rho)A) \geq 0$ for self-adjoint matrices $A$ and $\rho$ and for all continuous real-valued functions $f$ and $ g$ on the spectrum of $\rho$ with the same monotonically. \\
If $\mathcal{B}$ is a $C^*$-subalgebra of $\mathcal{A}$, then a conditional expectation $\mathcal{E} : \mathcal{A}\to \mathcal{B}$ is a contractive positive linear map such that $\mathcal{E}(BAC)=B\mathcal{E}(A)C$ for every $A\in\mathcal{A}$ and all $B,C \in \mathcal{B}$. 

Our investigation is based on the following definition.
\begin{definition}\label{def}
Let $\Phi$ be a tracial positive linear map from a $C^*$-algebra $\mathcal{A}$ into a unital  $C^*$-algebra $\mathcal{B}$, and let $\rho\in \mathcal{A}_h$. Then for  a continuous positive real-valued function $f$ and a   continuous real-valued  function $g$  which are defined on an interval containing the spectrum of $\rho$ with $\Phi(f(\rho)g(\rho)^2)>0$,
\begin{align*}
{\rm Cov}_{\rho,\Phi}^{f,g}(A,B)&:= \Phi\left(f(\rho) A^*B \right)-\Phi\left(f(\rho)g(\rho)A^*\right) \Phi\left(f(\rho)g(\rho)^2\right)^{-1}\Phi\left(f(\rho) g(\rho) B\right) \\ {\rm Var}_{\rho,\Phi}^{f,g}(A)&:={\rm Cov}_{\rho,\Phi}^{f,g}(A,A),
\end{align*}
are called the generalized covariance and the generalized variance of $A$ and $B$, respectively. In addition, for continuous real-valued functions $f$ and $g$, which are defined on an interval containing the spectrum of $\rho$, the generalized correlation and the generalized Wigner--Yanase--Dyson skew information of two elements $A$ and $B$ are defined by
\begin{align*}
{\rm Corr}_{\rho,\Phi}^{f,g}(A,B)&:= \Phi\left(f(\rho) g(\rho) A^*B \right)-\Phi\left(f(\rho) A^* g(\rho) B \right) \\ {\rm I}_{\rho,\Phi}^{f,g}(A)&:={\rm Corr}_{\rho,\Phi}^{f,g}(A,A)\,,
\end{align*}
respectively.
\end{definition}
If we consider $f(x)=x$, $g(x)=1$  and $\rho$ as a density operator, then ${\rm Cov}_{\rho, tr}^{f,g}$ and ${\rm Var}_{\rho, tr}^{f,g}$ are the same classical covariance and variance, respectively. Moreover, for $f(x)=x^{1-\alpha}$ and $g(x)=x^{\alpha}$, ${\rm Corr}_{\rho, tr}^{f,g}$ and ${\rm I}_{\rho, tr}^{f,g}$ are the one-parameter correlation and the one-parameter Wigner--Yanase--Dyson skew information, respectively.\\
In the case when $f(x)=x^{1-\alpha}$ and $g(x)=x^{\alpha}$, we simply denote ${\rm Corr}_{\rho,\Phi}^{f,g}$, and ${\rm I}_{\rho,\Phi}^{f,g}$ by ${\rm Corr}_{\rho,\Phi}^{\alpha}$, and ${\rm I}_{\rho,\Phi}^{\alpha}$, respectively.
It is known that for every tracial positive linear map, the matrix
\begin{eqnarray*}
 \begin{bmatrix} {\rm Var}_{\rho,\Phi}(A) & {\rm Cov}_{\rho,\Phi}(A,B)\\ {\rm Cov}_{\rho,\Phi}(B,A) & {\rm Var}_{\rho,\Phi}(B)\end{bmatrix}
\end{eqnarray*}
is positive, which is equivalent to
\begin{eqnarray}\label{varcovi}
{\rm Var}_{\rho,\Phi}(A)\geq {\rm Cov}_{\rho,\Phi}(A,B) \left({\rm Var}_{\rho,\Phi}(B)\right)^{-1} {\rm Cov}_{\rho,\Phi}(B,A),
\end{eqnarray}
and is called the variance-covariance inequality; see \cite{aram}.\\

To achieve our results we need the following known lemma. The reader may consult the survey paper \cite{MKX}.

\begin{lemma}\label{matrixp} \cite[Lemma 2.1]{choi}
Let $A\geq 0, B> 0$ be two operators in $ M_n(\mathcal{A})$. Then the block matrix $\begin{bmatrix}A& X\\ X^*& B \end{bmatrix}$ is positive if and only if $A\geq X B^{-1} X^*$.
\end{lemma}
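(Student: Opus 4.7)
The plan is to reduce the lemma to a Schur complement calculation via a congruence transformation. Since $B>0$ in $M_n(\mathcal{A})$, it is invertible, so $B^{-1}$ exists and $B^{-1}\geq 0$. I will introduce the unit-triangular block matrix
\[
T:=\begin{bmatrix} I & -XB^{-1} \\ 0 & I \end{bmatrix},
\]
which lies in $M_{2n}(\mathcal{A})$ and is invertible with inverse $\begin{bmatrix} I & XB^{-1} \\ 0 & I \end{bmatrix}$; note $T^*=\begin{bmatrix} I & 0 \\ -B^{-1}X^* & I \end{bmatrix}$.

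The first step is to verify, by a direct block multiplication, the identity
\[
T^*\begin{bmatrix} A & X \\ X^* & B \end{bmatrix}T \;=\; \begin{bmatrix} A-XB^{-1}X^* & 0 \\ 0 & B \end{bmatrix}.
\]
This is a routine calculation in $M_{2n}(\mathcal{A})$ that only uses associativity and the self-adjointness of $B^{-1}$ (so $(B^{-1})^*=B^{-1}$).

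The second step is to invoke the fact that congruence by an invertible element preserves positivity in both directions: for any $C\in M_{2n}(\mathcal{A})$, $C\geq 0$ if and only if $T^*CT\geq 0$ (the forward implication is by definition of positivity in a $C^*$-algebra, and the reverse uses invertibility of $T$ applied to $T^*CT$). Hence the block matrix $\begin{bmatrix} A & X \\ X^* & B \end{bmatrix}$ is positive if and only if the block-diagonal matrix $\mathrm{diag}(A-XB^{-1}X^*,\,B)$ is positive, which in turn is equivalent to both $A-XB^{-1}X^*\geq 0$ and $B\geq 0$. Since $B>0$ is assumed, this reduces to $A\geq XB^{-1}X^*$, finishing the proof.

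There is no real obstacle here; the only point needing minor care is that we are working in a $C^*$-algebra rather than $\mathbb{B}(\mathcal{H})$, so one should note that $M_n(\mathcal{A})$ is itself a $C^*$-algebra and that positivity can be tested via a faithful representation if one prefers to reason with vectors and inner products. With that observation, the congruence argument above goes through verbatim.
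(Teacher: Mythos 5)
Your strategy --- diagonalize the block matrix by a unit-triangular congruence and read off the Schur complement --- is the standard proof of this fact and is sound in $M_{2n}(\mathcal{A})$ essentially as you describe; note that the paper itself gives no proof of this lemma (it only cites Choi--Tsui), so there is nothing to compare against beyond observing that this is the expected argument. There is, however, a concrete error in your key identity: with $T=\begin{bmatrix} I & -XB^{-1}\\ 0 & I\end{bmatrix}$ and hence $T^*=\begin{bmatrix} I & 0\\ -B^{-1}X^* & I\end{bmatrix}$, the product $T^*\begin{bmatrix} A & X\\ X^* & B\end{bmatrix}T$ is \emph{not} block diagonal --- its $(1,2)$ entry is $X-AXB^{-1}$, which does not vanish in general. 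The congruence that works is the one taken in the other order,
\[
T\begin{bmatrix} A & X\\ X^* & B\end{bmatrix}T^*=\begin{bmatrix} A-XB^{-1}X^* & 0\\ 0 & B\end{bmatrix},
\]
as one sees from $X-XB^{-1}B=0$ and $X^*-BB^{-1}X^*=0$; equivalently, you may keep the form $T^*(\cdot)T$ provided you instead take $T=\begin{bmatrix} I & 0\\ -B^{-1}X^* & I\end{bmatrix}$. This is a one-line repair rather than a gap in the idea: congruence $C\mapsto TCT^*$ by an invertible $T$ also preserves positivity in both directions, since $TCT^*=\left(TC^{1/2}\right)\left(TC^{1/2}\right)^*\geq 0$ when $C\geq 0$, and conversely $C=T^{-1}\left(TCT^*\right)\left(T^{-1}\right)^*$. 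With the corrected identity, the rest of your argument goes through verbatim: positivity of the block matrix is equivalent to positivity of $\operatorname{diag}\left(A-XB^{-1}X^*,\,B\right)$, and since $B>0$ this reduces to $A\geq XB^{-1}X^*$.
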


%%%%%%%%%%%%%%%%%%%%%%%%%%%%%%%%%%%%%%%%%%
%%%%%%%%%%%%%%%%%%%%%%%%%%%%%%%%%%%%%%%%%% section 3
%%%%%%%%%%%%%%%%%%%%%%%%%%%%%%%%%%%%%%%%%%

\section{Cauchy--Schwarz type inequalities related to uncertainty relation}\label{sec3}
In this section, we give some Cauchy--Schwarz type inequalities for the generalized covariance and the generalized variance. The results of this section are generalizations of Heisenberg's uncertainty relation; see \cite{fujii1}.\\
First, we state the variance-covariance inequality for the generalized covariance and variance. Its proof involves some standard matrix tricks but we prove it for the sake of convenience.

\begin{lemma}\label{varcovg}
For every tracial positive linear map $\Phi$, the matrix
\begin{eqnarray*}
\begin{bmatrix} {\rm Var}_{\rho,\Phi}^{f,g}(A) & {\rm Cov}_{\rho,\Phi}^{f,g}(A,B)\\ {\rm Cov}_{\rho,\Phi}^{f,g}(B,A) & {\rm Var}_{\rho,\Phi}^{f,g}(B)\end{bmatrix}
\end{eqnarray*}
is positive.
\end{lemma}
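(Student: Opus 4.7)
The plan is a Schur-complement argument based on Lemma~\ref{matrixp}. Writing $P:=f(\rho)$ and $Q:=g(\rho)$ (so $P\geq 0$ and $P^{1/2}\in\mathcal{A}$ is available), I would first verify the identity
\[
\begin{bmatrix} {\rm Var}_{\rho,\Phi}^{f,g}(A) & {\rm Cov}_{\rho,\Phi}^{f,g}(A,B) \\ {\rm Cov}_{\rho,\Phi}^{f,g}(B,A) & {\rm Var}_{\rho,\Phi}^{f,g}(B) \end{bmatrix} = M - N\,K^{-1}N^*,
\]
where
\[
M:=\begin{bmatrix}\Phi(PA^*A) & \Phi(PA^*B)\\ \Phi(PB^*A) & \Phi(PB^*B)\end{bmatrix},\qquad N:=\begin{bmatrix}\Phi(PQA^*)\\ \Phi(PQB^*)\end{bmatrix},\qquad K:=\Phi(PQ^2).
\]
The only identities needed for this rewriting are $\Phi(PQA^*)^*=\Phi(PQA)$ and $\Phi(PQB^*)^*=\Phi(PQB)$, both of which follow from the self-adjointness of $PQ$ together with the trace property of $\Phi$. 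Since $K>0$ by the hypothesis built into Definition~\ref{def}, Lemma~\ref{matrixp} reduces the problem to proving positivity of the $3\times 3$ block matrix
\[
\mathcal{M}:=\begin{bmatrix} M & N\\ N^* & K\end{bmatrix}\in M_3(\mathcal{B}).
\]

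To establish $\mathcal{M}\geq 0$, I would exhibit it as $\Phi_3(Y^*Y)$ for the row
\[
Y:=\begin{bmatrix} AP^{1/2} & BP^{1/2} & QP^{1/2}\end{bmatrix}\in M_{1,3}(\mathcal{A}).
\]
Placing $P^{1/2}$ on the right of each entry is exactly what makes the comparison work: every entry of $Y^*Y$ has the shape $P^{1/2}XP^{1/2}$, and the trace property turns $\Phi(P^{1/2}XP^{1/2})$ into $\Phi(XP)$; one more cyclic move (together with $PQ=QP$, since both are functions of $\rho$) matches each such entry with the corresponding entry of $\mathcal{M}$. Because $Y^*Y\geq 0$ in $M_3(\mathcal{A})$ and, as recalled in the preliminaries, every tracial positive linear map between $C^*$-algebras is automatically completely positive, $\Phi_3(Y^*Y)\geq 0$ in $M_3(\mathcal{B})$. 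Feeding this back into Lemma~\ref{matrixp} completes the proof.

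The main (rather modest) obstacle is the noncommutative bookkeeping: the elements $P^{1/2}$, $Q$, $A$, $B$ do not commute, so both the Schur-complement rewriting and the entrywise identification of $\Phi_3(Y^*Y)$ with $\mathcal{M}$ require careful cycling of factors inside $\Phi$ via the trace property. The hypothesis $\Phi(f(\rho)g(\rho)^2)>0$ is precisely what legitimizes the inverse $K^{-1}$ appearing in the Schur complement, and hence the two appeals to Lemma~\ref{matrixp}.
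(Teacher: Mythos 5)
Your proposal is correct and follows essentially the same route as the paper: both form the positive $3\times 3$ block matrix $\Phi_3(Y^*Y)$ with $Y=\begin{bmatrix} Af(\rho)^{1/2} & Bf(\rho)^{1/2} & g(\rho)f(\rho)^{1/2}\end{bmatrix}$, use the tracial property to cycle the $f(\rho)^{1/2}$ factors, and then extract the $2\times 2$ covariance matrix as a Schur complement via Lemma~\ref{matrixp}. The only cosmetic difference is that the paper pads to a $4\times 4$ matrix so that both diagonal blocks are $2\times 2$ before invoking Lemma~\ref{matrixp}, whereas you apply the Schur-complement criterion directly with a $1\times 1$ corner block.
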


\begin{proof}
We have
\begin{eqnarray*}
 0&\leq&\begin{bmatrix} f(\rho)^{\frac{1}{2}}A^*&0&0 \\ f(\rho)^{\frac{1}{2}}B^*&0&0 \\ f(\rho)^{\frac{1}{2}} g(\rho) &0&0 \end{bmatrix}\begin{bmatrix} A f(\rho)^{\frac{1}{2}}&B f(\rho)^{\frac{1}{2}}& g(\rho) f(\rho)^{\frac{1}{2}}\\ 0&0&0 \\ 0&0&0 \end{bmatrix}\\ &=&\begin{bmatrix} f(\rho)^{\frac{1}{2}} A^*Af(\rho)^{\frac{1}{2}} & f(\rho)^{\frac{1}{2}}A^*B f(\rho)^{\frac{1}{2}}& f(\rho)^{\frac{1}{2}} A^* g(\rho) f(\rho)^{\frac{1}{2}} \\ f(\rho)^{\frac{1}{2}}B^*A f(\rho)^{\frac{1}{2}} & f(\rho)^{\frac{1}{2}}B^*B f(\rho)^{\frac{1}{2}} & f(\rho)^{\frac{1}{2}}B^* g(\rho) f(\rho)^{\frac{1}{2}}\\ f(\rho)^{\frac{1}{2}}g(\rho)A f(\rho)^{\frac{1}{2}} & f(\rho)^{\frac{1}{2}}g(\rho) B f(\rho)^{\frac{1}{2}}& f(\rho)^{\frac{1}{2}}g(\rho)^2 f(\rho)^{\frac{1}{2}}
\end{bmatrix}.
\end{eqnarray*}
It follows from the three-positivity and the tracial property of $\Phi$ that
 \begin{eqnarray*}
\begin{bmatrix} \Phi(f(\rho) A^*A) & \Phi(f(\rho) A^*B) & \Phi(f(\rho) g(\rho) A^*) \\ \Phi(f(\rho) B^*A) &\Phi(f(\rho) B^*B)& \Phi(f(\rho) g(\rho) B^*) \\ \Phi(f(\rho)g(\rho) A)& \Phi(f(\rho) g(\rho) B) & \Phi\left(f(\rho) g(\rho)^2\right)
\end{bmatrix} \geq 0.
\end{eqnarray*}
The positivity of the above matrix implies that
 \[
 \begin{bmatrix} \Phi(f(\rho) A^*A) & \Phi(f(\rho) A^*B) & \Phi(f(\rho) g(\rho) A^*) &0 \\ \Phi(f(\rho) B^*A) &\Phi(f(\rho) B^*B)& \Phi(f(\rho) g(\rho) B^*) &0  \\ \Phi(f(\rho)g(\rho) A)& \Phi(f(\rho) g(\rho) B) & \Phi\left(f(\rho) g(\rho)^2\right)&0 \\ 0 & 0 & 0 &  \Phi\left(f(\rho) g(\rho)^2\right)
\end{bmatrix} \geq 0.
 \] 
Hence, by employing Lemma \ref{matrixp}, we get
{\footnotesize \begin{align*}
  \begin{bmatrix} \Phi(f(\rho) g(\rho) A)^* & 0 \\ \Phi(f(\rho) g(\rho) B)^* & 0\end{bmatrix}\begin{bmatrix} \Phi\left(f(\rho)g(\rho) g(\rho)\right)^{-1} & 0\\ 0 &\Phi\left(f(\rho)g(\rho) g(\rho)\right)^{-1}
\end{bmatrix} &  \begin{bmatrix} \Phi(f(\rho) g(\rho)  A) & \Phi(f(\rho) g(\rho) B) \\ 0 & 0\end{bmatrix} \\ &\leq \begin{bmatrix} \Phi(f(\rho) A^*A) & \Phi(f(\rho) A^*B) \\ \Phi(f(\rho) B^*A) & \Phi(f(\rho) B^*B)
\end{bmatrix},
\end{align*}}
whence
{\footnotesize\begin{align*}
& \begin{bmatrix} \Phi(f(\rho) g(\rho) A)^* \Phi\left(f(\rho)g(\rho)^2\right)^{-1}\Phi(f(\rho) g(\rho) A) & \Phi(f(\rho) g(\rho) A)^* \Phi\left(f(\rho)g(\rho)^2  \right)^{-1}\Phi(f(\rho)g(\rho)  B) \\ \Phi(f(\rho) g(\rho) B)^* \Phi\left(f(\rho)g(\rho)^2 \right)^{-1}\Phi(f(\rho) g(\rho) A) & \Phi(f(\rho)g(\rho) B)^* \Phi\left(f(\rho)g(\rho)^2\right)^{-1}\Phi(f(\rho)g(\rho)  B)\end{bmatrix} \\ & \quad \leq \begin{bmatrix} \Phi(f(\rho) A^*A) & \Phi(f(\rho) A^*B) \\ \Phi(f(\rho) B^*A) & \Phi(f(\rho) B^*B)
\end{bmatrix},
\end{align*}}
or equivalently,
\begin{eqnarray*}
 \begin{bmatrix} {\rm Var}_{\rho,\Phi}^{f,g}(A) & {\rm Cov}_{\rho,\Phi}^{f,g}(A,B)\\ {\rm Cov}_{\rho,\Phi}^{f,g}(B,A) & {\rm Var}_{\rho,\Phi}^{f,g}(B)\end{bmatrix}\geq 0.
 \end{eqnarray*}
\end{proof}

Now, we are ready to give a generalization for Schr\"{o}dinger's uncertainty relation for a tracial positive linear map.

\begin{proposition}\label{gheiscom}
Let $\Phi$ be a tracial positive linear map from a $C^*$-algebra $\mathcal{A}$ into a unital  $C^*$-algebra $\mathcal{B}$, and let $\rho \in \mathcal{A}_h$. If $\Phi(\mathcal{A})$ is a commutative subset of $\mathcal{B}$, then the matrices
\begin{eqnarray*}
\begin{bmatrix} {\rm Var}_{\rho,\Phi}^{f,g}(A) & {\rm Re} \left({\rm Cov}_{\rho,\Phi}^{f,g}(A,B)\right)+\frac{1}{2} \Phi(f(\rho) [A,B])\\ {\rm Re} \left({\rm Cov}_{\rho,\Phi}^{f,g}(A,B)\right)-\frac{1}{2}\Phi(f(\rho) [A,B])& {\rm Var}_{\rho,\Phi}^{f,g}(B)
\end{bmatrix}
\end{eqnarray*}
and
\begin{eqnarray*}
\begin{bmatrix} {\rm Var}_{\rho,\Phi}^{f,g}(A) & \frac{1}{2} \Phi(f(\rho) [A,B])\\ -\frac{1}{2}\Phi(f(\rho) [A,B])& {\rm Var}_{\rho,\Phi}^{f,g}(B)
\end{bmatrix}
\end{eqnarray*}
are positive for all $A,B \in \mathcal{A}_h$, and all continuous positive real-valued functions $f$ and $g$  on the spectrum $\rho$ with $\Phi(f(\rho)g(\rho)^2)>0$.
\end{proposition}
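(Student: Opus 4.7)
The plan is to identify the first matrix with the one from Lemma \ref{varcovg} and then, using commutativity of $\Phi(\mathcal{A})$, to deduce positivity of the second matrix from that of the first via Gelfand duality. I begin by collecting adjoint information: since $A,B\in\mathcal{A}_h$, $\rho\in\mathcal{A}_h$, and $f,g$ are real-valued, the elements $f(\rho), g(\rho), A, B$ are self-adjoint, so traciality and $*$-preservation of $\Phi$ imply that $\Phi(f(\rho)g(\rho)A)$, $\Phi(f(\rho)g(\rho)B)$, $\Phi(f(\rho)g(\rho)^2)$ are self-adjoint, that $\Phi(f(\rho)[A,B])$ is skew-Hermitian (since $[A,B]^*=-[A,B]$ and traciality lets $f(\rho)$ move past the commutator), and that ${\rm Cov}_{\rho,\Phi}^{f,g}(A,B)^*={\rm Cov}_{\rho,\Phi}^{f,g}(B,A)$.

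Next I use commutativity to establish the key identity
\[
{\rm Cov}_{\rho,\Phi}^{f,g}(A,B)-{\rm Cov}_{\rho,\Phi}^{f,g}(B,A)=\Phi(f(\rho)[A,B]).
\]
The direct terms $\Phi(f(\rho)A^*B)-\Phi(f(\rho)B^*A)$ already produce $\Phi(f(\rho)[A,B])$ since $A,B$ are self-adjoint. The cross terms $\Phi(f(\rho)g(\rho)A)\,\Phi(f(\rho)g(\rho)^2)^{-1}\,\Phi(f(\rho)g(\rho)B)$ and $\Phi(f(\rho)g(\rho)B)\,\Phi(f(\rho)g(\rho)^2)^{-1}\,\Phi(f(\rho)g(\rho)A)$ cancel against one another because all three factors belong to the unital commutative $C^*$-subalgebra $\mathcal{C}\subseteq\mathcal{B}$ generated by $\Phi(\mathcal{A})$, and hence commute pairwise (the inverse automatically lies in $\mathcal{C}$). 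Combined with ${\rm Cov}(A,B)^*={\rm Cov}(B,A)$, this gives
\[
{\rm Cov}_{\rho,\Phi}^{f,g}(A,B)={\rm Re}\bigl({\rm Cov}_{\rho,\Phi}^{f,g}(A,B)\bigr)+\tfrac{1}{2}\Phi(f(\rho)[A,B]),
\]
and symmetrically with the opposite sign for ${\rm Cov}(B,A)$. Thus the first matrix in the proposition is literally the matrix of Lemma \ref{varcovg}, and is therefore positive.

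For the second matrix, I pass to $\mathcal{C}\cong C(X)$ via the Gelfand transform; then $M_2(\mathcal{C})\cong C(X,M_2(\mathbb{C}))$ and positivity in $M_2(\mathcal{C})$ is pointwise on $X$. Writing $V_A,V_B$ for the two variances, $R$ for the real part of ${\rm Cov}_{\rho,\Phi}^{f,g}(A,B)$, and $iH$ for $\tfrac{1}{2}\Phi(f(\rho)[A,B])$ (with $H$ self-adjoint by skew-Hermiticity), the already-established positivity of the first matrix yields pointwise $V_A(x)V_B(x)\geq R(x)^2+H(x)^2\geq H(x)^2$, which is exactly the scalar positivity condition for the $2\times 2$ matrix with diagonal $(V_A(x),V_B(x))$ and off-diagonal $\pm iH(x)$. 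I expect the only real difficulty to be the key identity in the middle step, where commutativity of $\Phi(\mathcal{A})$ is genuinely used to cancel the cross terms; once the first matrix is identified with the one from Lemma \ref{varcovg}, both positivity claims follow — the first immediately, the second by a routine pointwise scalar argument via Gelfand duality.
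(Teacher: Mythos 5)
Your proof is correct, and while it rests on the same key identity as the paper's --- namely that commutativity of $\Phi(\mathcal{A})$ kills the cross terms so that ${\rm Cov}_{\rho,\Phi}^{f,g}(A,B)-{\rm Cov}_{\rho,\Phi}^{f,g}(B,A)=\Phi(f(\rho)[A,B])$, whence ${\rm Cov}_{\rho,\Phi}^{f,g}(A,B)={\rm Re}({\rm Cov}_{\rho,\Phi}^{f,g}(A,B))+\tfrac{1}{2}\Phi(f(\rho)[A,B])$ --- it is organized differently and more economically in both halves. For the first matrix you simply observe that it \emph{is} the matrix of Lemma \ref{varcovg} once the off-diagonal entries are recognized as ${\rm Cov}_{\rho,\Phi}^{f,g}(A,B)$ and ${\rm Cov}_{\rho,\Phi}^{f,g}(B,A)$; the paper instead derives the same identity, then computes $|{\rm Cov}_{\rho,\Phi}^{f,g}(A,B)|^2=|{\rm Re}({\rm Cov}_{\rho,\Phi}^{f,g}(A,B))|^2+\tfrac{1}{4}|\Phi(f(\rho)[A,B])|^2$, invokes a continuity argument to assume ${\rm Var}_{\rho,\Phi}^{f,g}(B)>0$, and repackages the resulting inequality through Lemma \ref{matrixp}. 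Your route avoids both the invertibility assumption and the detour. For the second matrix the two arguments genuinely diverge: the paper adds the variance--covariance matrix to a sign-conjugated copy of its transpose so the real parts cancel (a purely algebraic trick needing no spectral theory), whereas you pass to the commutative $C^*$-subalgebra generated by $\Phi(\mathcal{A})$ and the unit, use spectral permanence to keep $\Phi(f(\rho)g(\rho)^2)^{-1}$ inside it, and check positivity pointwise on the Gelfand spectrum via the scalar inequality $V_A(x)V_B(x)\ge R(x)^2+H(x)^2\ge H(x)^2$. Both are valid; the paper's is slightly more self-contained, while yours makes the underlying scalar mechanism (discarding $R^2$ from the determinant bound) completely transparent. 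The only point worth making explicit in a write-up is that positivity in $M_2(\mathcal{C})\cong C(X,M_2(\mathbb{C}))$ agrees with positivity in $M_2(\mathcal{B})$, which again is spectral permanence for the $C^*$-subalgebra $M_2(\mathcal{C})\subseteq M_2(\mathcal{B})$.
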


\begin{proof}
We have 
{\footnotesize\begin{align*}
{\rm Cov}_{\rho,\Phi}^{f,g}(A,B)-{\rm Cov}_{\rho,\Phi}^{f,g}(B,A)&= \Phi\left(f(\rho) AB\right)- \Phi\left(f(\rho)g(\rho)  A)\Phi(f(\rho)g(\rho)^2\right)^{-1}\Phi\left(f(\rho)g(\rho)  B\right)\\ 
&	\ \ - \Phi\left(f(\rho) BA\right)+\Phi\left(f(\rho)g(\rho)  B\right)\Phi\left(f(\rho)g(\rho)^2 \right)^{-1}\Phi\left(f(\rho)g(\rho)  A\right)\\ &=\Phi\left(f(\rho)[A,B]\right) \tag{since $\Phi(\mathcal{A})$ is commutative}
\end{align*}}
and an easy calculation shows that
\begin{eqnarray*}
{\rm Cov}_{\rho,\Phi}^{f,g}(A,B)+{\rm Cov}_{\rho,\Phi}^{f,g}(B,A) =2{\rm Re}\left({\rm Cov}_{\rho,\Phi}^{f,g}(A,B)\right).
\end{eqnarray*}
Summing both sides of the above equalities, we get
\begin{eqnarray*}
2{\rm Cov}_{\rho,\Phi}^{f,g}(A,B) =\Phi\left(f(\rho)[A,B]\right)+2{\rm Re}\left({\rm Cov}_{\rho,\Phi}^{f,g}(A,B)\right).
\end{eqnarray*}
Since $\Phi(f(\rho)[A,B])^* =-\Phi(f(\rho)[A,B])$ and the range of $\Phi$ is commutative, we get
\begin{eqnarray*}\label{rabete}
\left|{\rm Cov}_{\rho,\Phi}^{f,g}(A,B)\right|^2=\left|{\rm Re} \left({\rm Cov}_{\rho,\Phi}^{f,g}(A,B)\right)\right|^2+ \frac{1}{4} \left|\Phi(f(\rho) [A,B])\right|^2.
\end{eqnarray*}
By a continuity argument we can assume that ${\rm Var}_{\rho,\Phi}^{f,g}(B)>0$ and by using Lemma \ref{varcovg}, we conclude that
\begin{eqnarray*}
{\rm Var}_{\rho,\Phi}^{f,g}(A){\rm Var}_{\rho,\Phi}^{f,g}(B)\geq \left|{\rm Re} \left({\rm Cov}_{\rho,\Phi}^{f,g}(A,B)\right)\right|^2+ \frac{1}{4} \left|\Phi(f(\rho) [A,B])\right|^2.
\end{eqnarray*}
Hence,

\begin{align*}
{\rm Var}_{\rho,\Phi}^{f,g}(A) &=\left(\left|{\rm Re} \left({\rm Cov}_{\rho,\Phi}^{f,g}(A,B)\right)\right|^2 + \frac{1}{4} \left|\Phi\left(f(\rho) [A,B]\right)\right|^2\right) \left({\rm Var}_{\rho,\Phi}^{f,g}(B)\right)^{-1}\\ &=\left({\rm Re} \left({\rm Cov}_{\rho,\Phi}^{f,g}(A,B)\right)^2 -\frac{1}{4} \Phi(f(\rho) [A,B])^2\right) \left({\rm Var}_{\rho,\Phi}^{f,g}(B)\right)^{-1} \\ & \tag{since $\Phi(f(\rho)[A,B])^* =-\Phi(f(\rho)[A,B]$}
\\ &= \left({\rm Re} \left({\rm Cov}_{\rho,\Phi}^{f,g}(A,B)\right) + \frac{1}{2} \Phi\left(f(\rho) [A,B]\right) \right) \\ &\quad \cdot \left({\rm Var}_{\rho,\Phi}^{f,g}(B)\right)^{-1} \left (  {\rm Re} \left({\rm Cov}_{\rho,\Phi}^{f,g}(A,B)\right)+\frac{1}{2} \Phi(f(\rho) [A,B]) \right)^*.\\ & \tag{since the range of $\Phi$ is commutative}
\end{align*}
Now, Lemma \ref{matrixp} implies the positivity of the matrix
\begin{eqnarray*}
\begin{bmatrix} {\rm Var}_{\rho,\Phi}^{f,g}(A) & {\rm Re} \left({\rm Cov}_{\rho,\Phi}^{f,g}(A,B)\right)+\frac{1}{2} \Phi(f(\rho) [A,B])\\ {\rm Re} \left({\rm Cov}_{\rho,\Phi}^{f,g}(A,B)\right)-\frac{1}{2}\Phi(f(\rho) [A,B])& {\rm Var}_{\rho,\Phi}^{f,g}(B)
\end{bmatrix}.
\end{eqnarray*}
To prove the positivity of the second matrix, first note that the positivity of the matrix $\begin{bmatrix} {\rm Var}_{\rho,\Phi}^{f,g}(A) & {\rm Cov}_{\rho,\Phi}^{f,g}(B,A)\\ {\rm Cov}_{\rho,\Phi}^{f,g}(A,B) & {\rm Var}_{\rho,\Phi}^{f,g}(B)\end{bmatrix}
$ implies the positivity of \begin{eqnarray*} X=\begin{bmatrix} {\rm Var}_{\rho,\Phi}^{f,g}(A) & -{\rm Cov}_{\rho,\Phi}^{f,g}(B,A)\\ -{\rm Cov}_{\rho,\Phi}^{f,g}(A,B) & {\rm Var}_{\rho,\Phi}^{f,g}(B)\end{bmatrix}.
\end{eqnarray*} In addition, it follows from the commutativity of the range of $\Phi$ that
\begin{eqnarray*}
Y= \begin{bmatrix} {\rm Var}_{\rho,\Phi}^{f,g}(A) & {\rm Cov}_{\rho,\Phi}^{f,g}(A,B)\\ {\rm Cov}_{\rho,\Phi}^{f,g}(B,A) & {\rm Var}_{\rho,\Phi}^{f,g}(B)\end{bmatrix}\geq 0.
\end{eqnarray*}
Therefore,
\begin{eqnarray*}
0 \leq X+Y=\begin{bmatrix} 2{\rm Var}_{\rho,\Phi}^{f,g}(A) & \Phi\left(f(\rho)[A,B]\right)\\ -\Phi\left(f(\rho)[A,B]\right) &2 {\rm Var}_{\rho,\Phi}^{f,g}(B)\end{bmatrix},
\end{eqnarray*}
whence we arrive at the second inequality.
\end{proof}

Next, we aim to give a generalization of Heisenberg's uncertainty relation for a tracial positive linear map between $C^*$-algebra. To get this result we need some lemmas.

\begin{lemma}[Choi--Tsui]\label{choi}\cite [pages 59 -- 60]{choi}
Let $\mathcal{A},\mathcal{B}$ be $C^*$-algebras such that either one of them is $W^*$-algebra or $\mathcal{B}$ is an injective $C^*$-algebra. Let $\Phi:\mathcal{A}\longrightarrow \mathcal{B}$ be a tracial positive linear map. Then there exist a
commutative $C^*$-algebra $C(X)$ for some compact Hausdorff space $X$, tracial positive linear maps $\phi_1 : A\longrightarrow C(X)$, and $\phi_2: C(X)\longrightarrow \mathcal{B}$ such that $\Phi=\phi_2\circ \phi_1$. Moreover, in case that $\Phi$ is unital, then
 $\phi_1$ and $\phi_2$ can be chosen to be unital. In particular, $\Phi$ is completely positive.
\end{lemma}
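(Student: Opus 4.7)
The plan is to realize $C(X)$ as an abelian quotient of $\mathcal A$ through which $\Phi$ factors. Since $\Phi$ is tracial, $\Phi([A,B])=\Phi(AB)-\Phi(BA)=0$ for every $A,B\in\mathcal A$. Let $\mathcal I$ denote the closed two-sided $*$-ideal of $\mathcal A$ generated by all commutators; this is indeed a $*$-ideal because $[A,B]^{*}=-[A^{*},B^{*}]$. The quotient $\mathcal A/\mathcal I$ is commutative by construction, so by Gelfand--Naimark it is isometrically $*$-isomorphic to $C(X)$ for some compact Hausdorff space $X$. If I can establish that $\Phi$ annihilates the whole of $\mathcal I$ (not merely the generating commutators), then $\Phi$ descends to a well-defined positive linear map $\bar\Phi:C(X)\to\mathcal B$, and the desired factorization is obtained by setting $\phi_{1}:=\pi$ (the quotient map) and $\phi_{2}:=\bar\Phi$.

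The crux is to prove that $\Phi|_{\mathcal I}=0$. The tracial property alone only supplies the cyclic identity $\Phi(X_{1}\cdots X_{n})=\Phi(X_{n}X_{1}\cdots X_{n-1})$, which in general does not force $\Phi(C[A,B]D)=0$ for arbitrary $C,D$; additional structural input is therefore required, and this is precisely the role of the hypothesis that one of $\mathcal A,\mathcal B$ is $W^{*}$ or that $\mathcal B$ is injective. When $\mathcal B$ is a von Neumann algebra, I would pass to a normal extension of $\Phi$ on the enveloping von Neumann algebra $\mathcal A^{**}$, where double-commutant and center-valued trace techniques become available. When $\mathcal A$ is itself a von Neumann algebra, the type decomposition together with the faithful center-valued trace on the finite part lets me verify ideal-vanishing on each summand separately, reducing the type $I_{n}$ pieces to a transparent matrix-algebra computation. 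When $\mathcal B$ is injective, I would extend $\Phi$ to a map on a larger $C^{*}$-algebra containing $\mathcal A$ and bring the problem back into the $W^{*}$ framework just handled. In each scenario $\Phi$ is shown to be constant on the commutator-enriched products spanning a dense subset of $\mathcal I$, and continuity then forces $\Phi|_{\mathcal I}=0$.

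Once the factorization $\Phi=\phi_{2}\circ\phi_{1}$ is secured, the remaining assertions follow with little effort. Both factors are tracial: $\phi_{1}$ trivially, being a $*$-homomorphism into a commutative algebra; $\phi_{2}$ because
\begin{equation*}
\phi_{2}(\pi(AB))=\Phi(AB)=\Phi(BA)=\phi_{2}(\pi(BA)),
\end{equation*}
and $\pi$ is surjective. If $\Phi$ is unital, then $\phi_{1}(1_{\mathcal A})=1_{C(X)}$ automatically, and $\phi_{2}(1_{C(X)})=\Phi(1_{\mathcal A})=1_{\mathcal B}$, so unitality of both factors is free. Complete positivity of $\Phi$ then drops out of the commutativity of $C(X)$: $\phi_{1}$ is a $*$-homomorphism and is completely positive, while $\phi_{2}$ is a positive map on a commutative $C^{*}$-algebra and is therefore completely positive by the Stinespring-type criterion recorded in the introduction as \cite[Theorem~1.2.4]{stormer}, and composition preserves complete positivity.

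The single significant obstacle, by a wide margin, is the ideal-vanishing step at the start of the second paragraph; the $W^{*}$/injectivity hypotheses are included precisely because they are what allows one to bridge the gap between ``$\Phi$ kills commutators'' and ``$\Phi$ kills the ideal they generate''. Everything after that reduction is essentially bookkeeping around the quotient construction.
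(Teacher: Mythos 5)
The paper does not actually prove this lemma; it cites Choi--Tsui for it. Your reconstruction, however, breaks down at exactly the step you flag as the crux: the claim that $\Phi$ annihilates the closed two-sided ideal $\mathcal I$ generated by the commutators is false, and no amount of $W^*$/injectivity input can rescue it. Take $\mathcal A=M_n(\mathbb C)$ (a $W^*$-algebra), $\mathcal B=\mathbb C$ (injective and a $W^*$-algebra), and $\Phi=\mathrm{Tr}$, the prototypical tracial positive linear map. Since $M_n(\mathbb C)$ is simple and contains nonzero commutators, the closed ideal they generate is all of $M_n(\mathbb C)$; but $\mathrm{Tr}(I)=n\neq 0$, so $\mathrm{Tr}$ does not vanish on $\mathcal I$, and the quotient $\mathcal A/\mathcal I=\{0\}$ cannot carry any factorization of $\mathrm{Tr}$. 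What traciality actually gives is vanishing on the closed \emph{linear span} of commutators, which is in general far smaller than the ideal they generate; and since that linear span is not an ideal, there is no quotient $C^*$-algebra to descend to. So the object $\mathcal A/\mathcal I$ you construct in the first paragraph is simply the wrong candidate for $C(X)$, and the program of your second paragraph (proving $\Phi|_{\mathcal I}=0$ by $W^*$ techniques) is a program to prove a false statement.

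The correct route --- and the one in Choi--Tsui --- is dual to yours: rather than quotienting $\mathcal A$ by commutators, one projects $\mathcal A$ \emph{onto} a commutative subalgebra by a trace-like map. One passes to the enveloping von Neumann algebra (this is where the $W^*$/injectivity hypotheses enter, to extend $\Phi$ or reduce to the $W^*$ case), discards the properly infinite part, on which every element is a finite sum of commutators and hence is killed by $\Phi$, and on the finite part invokes the center-valued trace $\tau:\mathcal M\to\mathcal Z(\mathcal M)$. By Dixmier averaging, $\tau(A)-A$ lies in the norm-closed linear span of commutators, so $\Phi=\Phi\circ\tau$; one then takes $\phi_1=\tau$ (suitably restricted), $C(X)=\mathcal Z(\mathcal M)$, and $\phi_2=\Phi|_{\mathcal Z(\mathcal M)}$. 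In the $M_n$ example this recovers $\mathrm{Tr}=\mathrm{Tr}|_{\mathbb C I}\circ\bigl(\tfrac1n\mathrm{Tr}(\cdot)I\bigr)$, exhibiting the commutative intermediary as the center, not a quotient. Your closing paragraph --- traciality and unitality of the factors, and complete positivity of a composition through a commutative algebra --- is fine once a correct factorization is in hand, but the factorization itself cannot be produced by your construction.
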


The next lemma is a consequence of the positivity of the matrix $\begin{bmatrix}
A^*B^{-1}A & A^* \\ A & B
\end{bmatrix}$ and two-positivity of $\Phi$.
\begin{lemma}\label{kadison} Let $\Phi$ be a tracial positive linear map from a $C^*$-algebra $\mathcal{A}$ into a unital  $C^*$-algebra $\mathcal{B}$. Then
\begin{eqnarray}\label{kadi2}
\Phi(A^*B^{-1}A)\geq \Phi(A)^*\left(\Phi(B)\right)^{-1}\Phi(A)
\end{eqnarray}
for all $B>0$ and all $A\in\mathcal{A}$.
\end{lemma}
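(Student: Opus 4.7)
The plan is to follow exactly the strategy that the authors foreshadow in the sentence preceding the lemma: exhibit a positive $2\times 2$ block matrix over $\mathcal{A}$ whose $(1,1)$ entry is $A^*B^{-1}A$, push it forward by the induced map $\Phi_2$ (which is positive because $\Phi$ is $2$-positive), and then extract the claimed operator inequality from the resulting $2\times 2$ block over $\mathcal{B}$ by reversing Lemma \ref{matrixp}.

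First I would verify that for $B>0$ the block matrix
\[
M := \begin{bmatrix} A^*B^{-1}A & A^*\\ A & B \end{bmatrix}
\]
is positive in $M_2(\mathcal{A})$. This is immediate either from Lemma \ref{matrixp} (the Schur-complement inequality $A^*B^{-1}A \geq A^*B^{-1}A$ is trivial) or, more concretely, from the factorisation $M = C^*C$ with $C = \begin{bmatrix} B^{-1/2}A & B^{1/2}\end{bmatrix}$.

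Next, the tracial hypothesis on $\Phi$ together with Lemma \ref{choi} guarantees that $\Phi$ is completely positive, hence in particular $2$-positive, so that $\Phi_2(M)\geq 0$ in $M_2(\mathcal{B})$. Using $\Phi(A^*)=\Phi(A)^*$, this positivity reads
\[
\begin{bmatrix} \Phi(A^*B^{-1}A) & \Phi(A)^*\\ \Phi(A) & \Phi(B) \end{bmatrix}\geq 0.
\]
A second application of Lemma \ref{matrixp}, this time in the reverse direction and over $\mathcal{B}$, then converts this block positivity into the Schur-type inequality $\Phi(A^*B^{-1}A)\geq \Phi(A)^*\Phi(B)^{-1}\Phi(A)$, which is precisely \eqref{kadi2}.

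The only delicate point is that Lemma \ref{matrixp} nominally requires the $(2,2)$-block to be strictly positive; if $\Phi(B)$ happens not to be invertible even though $B$ is, I would handle it by replacing $\Phi(B)$ with $\Phi(B)+\varepsilon I$, applying the argument for each $\varepsilon>0$, and letting $\varepsilon\downarrow 0$. I do not anticipate any genuine obstacle: the entire proof is a double application of the block-matrix criterion of Lemma \ref{matrixp}, sandwiching the $2$-positivity of $\Phi$ supplied by the Choi--Tsui decomposition.
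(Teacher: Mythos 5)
Your proposal is correct and follows exactly the argument the paper sketches: positivity of the block matrix $\begin{bmatrix} A^*B^{-1}A & A^*\\ A & B\end{bmatrix}$, two-positivity of $\Phi$, and Lemma \ref{matrixp} to extract the Schur-complement inequality; your $\varepsilon$-perturbation remark is a sensible extra precaution. The only cosmetic point is that two-positivity here should be justified by the general fact recalled in the preliminaries (every tracial positive map between $C^*$-algebras is completely positive) rather than by Lemma \ref{choi}, whose extra hypotheses ($W^*$-algebra or injectivity) are not assumed in this lemma.
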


The following theorem gives a generalization of Heisenberg's uncertainty relation for tracial positive linear maps between $C^*$-algebras.
\begin{theorem}\label{heisnoncom}
Let $\mathcal{A}$ be a $C^*$-algebra  and  $\mathcal{B}$ be a unital $C^*$-algebra such that either one of them is $W^*$-algebra or $\mathcal{B}$ is an injective $C^*$-algebra. If $\Phi :\mathcal{A} \longrightarrow \mathcal{B}$ is a tracial positive linear map and $\rho \in \mathcal{A}_h$, then the matrix
\begin{eqnarray*}
\begin{bmatrix} {\rm Var}_{\rho,\Phi}^{f,g}(A) & \frac{1}{2} \Phi\left(f(\rho) [A,B]\right)\\ -\frac{1}{2}\Phi\left(h(\rho\right) [A,B])& {\rm Var}_{\rho,\Phi}^{f,g}(B)
\end{bmatrix}
\end{eqnarray*}
is positive for all $A,B \in \mathcal{A}_h$, and all continuous positive real-valued functions $f$ and $g$ on the spectrum $\rho$ with $\Phi(f(\rho)g(\rho)^2)>0$.
\end{theorem}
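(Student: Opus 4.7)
The strategy is to reduce to the commutative case of Proposition \ref{gheiscom} by factoring $\Phi$ through a commutative $C^*$-algebra using the Choi--Tsui decomposition, and then to lift the resulting inequality back to $\mathcal{B}$ via a Kadison--Schwarz step.

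First, I would invoke Lemma \ref{choi} to write $\Phi=\phi_2\circ\phi_1$, where $\phi_1:\mathcal{A}\to C(X)$ and $\phi_2:C(X)\to\mathcal{B}$ are tracial positive linear maps. Because the range of $\phi_1$ lies in the commutative algebra $C(X)$, Proposition \ref{gheiscom} (with $\phi_1$ in place of $\Phi$) yields
\begin{eqnarray*}
\begin{bmatrix} {\rm Var}_{\rho,\phi_1}^{f,g}(A) & \tfrac{1}{2}\phi_1\!\left(f(\rho)[A,B]\right)\\ -\tfrac{1}{2}\phi_1\!\left(f(\rho)[A,B]\right) & {\rm Var}_{\rho,\phi_1}^{f,g}(B)\end{bmatrix}\geq 0.
\end{eqnarray*}
Since $\phi_2$ is tracial positive, it is completely positive (again by Lemma \ref{choi}), so applying $\phi_2$ entrywise preserves positivity, and the identity $\phi_2\circ\phi_1=\Phi$ turns the off-diagonal entries into $\pm\tfrac{1}{2}\Phi(f(\rho)[A,B])$, giving
\begin{eqnarray*}
\begin{bmatrix} \phi_2\!\left({\rm Var}_{\rho,\phi_1}^{f,g}(A)\right) & \tfrac{1}{2}\Phi\!\left(f(\rho)[A,B]\right)\\ -\tfrac{1}{2}\Phi\!\left(f(\rho)[A,B]\right) & \phi_2\!\left({\rm Var}_{\rho,\phi_1}^{f,g}(B)\right)\end{bmatrix}\geq 0.
\end{eqnarray*}

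The last step is to replace the diagonal entries by the true generalized variances. I would apply Lemma \ref{kadison} to $\phi_2$ with $P:=\phi_1(f(\rho)g(\rho)^2)>0$ and $Q:=\phi_1(f(\rho)g(\rho)A)$ to obtain
\[\phi_2\!\left(Q^*P^{-1}Q\right)\;\geq\;\Phi\!\left(f(\rho)g(\rho)A\right)^{*}\Phi\!\left(f(\rho)g(\rho)^2\right)^{-1}\Phi\!\left(f(\rho)g(\rho)A\right),\]
and combine this with $\phi_2\!\left(\phi_1(f(\rho)A^*A)\right)=\Phi(f(\rho)A^*A)$ and the definition of the generalized variance to deduce ${\rm Var}_{\rho,\Phi}^{f,g}(A)\geq \phi_2\!\left({\rm Var}_{\rho,\phi_1}^{f,g}(A)\right)$, and similarly with $B$ in place of $A$. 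Adding the resulting nonnegative diagonal correction $\text{diag}\!\left({\rm Var}_{\rho,\Phi}^{f,g}(A)-\phi_2({\rm Var}_{\rho,\phi_1}^{f,g}(A)),\,{\rm Var}_{\rho,\Phi}^{f,g}(B)-\phi_2({\rm Var}_{\rho,\phi_1}^{f,g}(B))\right)$ to the preceding matrix inequality yields the positivity asserted in the theorem.

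The main technical obstacle will be the invertibility hypothesis of Proposition \ref{gheiscom}: it requires $\phi_1(f(\rho)g(\rho)^2)>0$ in $C(X)$, but only $\Phi(f(\rho)g(\rho)^2)=\phi_2\!\left(\phi_1(f(\rho)g(\rho)^2)\right)>0$ in $\mathcal{B}$ is given, and $\phi_1(f(\rho)g(\rho)^2)\in C(X)$ may vanish at some points of $X$. I would circumvent this by a standard perturbation: replace $\phi_1(f(\rho)g(\rho)^2)$ by $\phi_1(f(\rho)g(\rho)^2)+\varepsilon\mathbf{1}_{C(X)}$ throughout the derivation (and modify the $\Phi$-side denominator correspondingly through $\phi_2$), run the entire chain of inequalities for each $\varepsilon>0$, and then pass to the limit $\varepsilon\to 0^+$ using norm continuity of the resolvent.
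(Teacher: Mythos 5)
Your proposal follows essentially the same route as the paper's own proof: factor $\Phi=\phi_2\circ\phi_1$ through $C(X)$ via Lemma \ref{choi}, apply Proposition \ref{gheiscom} to $\phi_1$, push the matrix inequality through the two-positive map $\phi_2$, and use Lemma \ref{kadison} to show $\phi_2\bigl({\rm Var}_{\rho,\phi_1}^{f,g}(A)\bigr)\leq {\rm Var}_{\rho,\Phi}^{f,g}(A)$ before adding the diagonal correction. Your $\varepsilon$-perturbation of $\phi_1(f(\rho)g(\rho)^2)$ addresses a genuine point the paper passes over in silence --- the hypothesis $\Phi(f(\rho)g(\rho)^2)>0$ does not guarantee that $\phi_1(f(\rho)g(\rho)^2)$ is invertible in $C(X)$ --- and your limiting argument correctly repairs it, so your write-up is if anything more complete than the original.
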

\begin{proof}
According to Lemma \ref{choi} there exist a commutative $C^*$-algebra $C(X)$ and tracial positive linear maps $\phi_1 : A\longrightarrow C(X)$ and $\phi_2: C(X)\longrightarrow \mathcal{B}$ such that $\Phi=\phi_2\circ \phi_1$. Hence, Proposition \ref{gheiscom} ensures the positivity of the matrix
\begin{eqnarray*}
\begin{bmatrix} {\rm Var}_{\rho,\phi_1}^{f,g}(A) & \frac{1}{2} \phi_1\left(f(\rho) [A,B]\right)\\ -\frac{1}{2}\phi_1\left(f(\rho) [A,B]\right)& {\rm Var}_{\rho,\phi_1}^{f,g}(B)
\end{bmatrix}.
\end{eqnarray*}
Since $\phi_2$ is two-positive, we get
\begin{eqnarray*}
\begin{bmatrix} \phi_2\left({\rm Var}_{\rho,\phi_1}^{f,g}(A)\right) &\frac{1}{2} \phi_2\left(\phi_1(f(\rho) [A,B])\right)\\-\frac{1}{2} \phi_2\left(\phi_1(f(\rho) [A,B])\right)& \phi_2\left({\rm Var}_{\rho,\phi_1}^{f,g}(B)\right)
\end{bmatrix}\geq 0.
\end{eqnarray*}
 In addition,
{\footnotesize\begin{align*}
 \phi_2\left({\rm Var}_{\rho,\phi_1}^{f,g}(A)\right)&= \phi_2\left(\phi_1(f(\rho) A^2)-\phi_1(f(\rho)g(\rho) A) \left(\phi_1(f(\rho)g(\rho)^2 )\right)^{-1}\phi_1(f(\rho) g(\rho) A)\right)\\ 
 &=\Phi\left(f(\rho) A^2\right) - \phi_2\left(\phi_1(f(\rho)g(\rho) A) \left(\phi_1\left(f(\rho)g(\rho)^2\right)\right)^{-1}\phi_1(f(\rho) g(\rho) A)\right)\\ &\leq \Phi\left(f(\rho) A^2\right)\\ & \quad -(\phi_2\circ\phi_1)\left(f(\rho)g(\rho) A\right) \left(\left(\phi_2\circ\phi_1)(f(\rho)g(\rho)^2 \right)\right)^{-1}(\phi_2\circ\phi_1)(f(\rho)g(\rho)  A)\\ & \tag{by inequality \eqref{kadi2}}\\
 &= \Phi\left(f(\rho) A^2\right) -\Phi(f(\rho)g(\rho)A) \left(\Phi\left(f(\rho)g(\rho)^2\right)\right)^{-1}\Phi(f(\rho) g(\rho)A)\\
 &={\rm Var}_{\rho,\Phi}^{f,g}(A).
\end{align*}}
Similarly, we can get $ {\rm Var}_{\rho,\Phi}^{f,g}(B)\geq \phi_2\left({\rm Var}_{\rho,\phi_1}^{f,g}(B)\right)$. Hence,
\begin{eqnarray*}
0 &\leq & \begin{bmatrix} \phi_2\left({\rm Var}_{\rho,\phi_1}^{f,g}(A)\right) &\frac{1}{2} \phi_2\left(\phi_1(f(\rho) [A,B])\right)\\-\frac{1}{2} \phi_2\left(\phi_1(f(\rho) [A,B])\right)& \phi_2\left({\rm Var}_{\rho,\phi_1}^{f,g}(B)\right)
\end{bmatrix}
\\ &\leq& \begin{bmatrix} {\rm Var}_{\rho,\Phi}^{f,g}(A) & \frac{1}{2} \Phi\left(f(\rho) [A,B]\right)\\ -\frac{1}{2}\Phi\left(h(\rho\right) [A,B])& {\rm Var}_{\rho,\Phi}^{f,g}(B)
\end{bmatrix}.
\end{eqnarray*}
\end{proof}

%%%%%%%%%%%%%%%%%%%%%%%%%%%%%%%%%%%%
%%%%%%%%%%%%%%%%%%%%%%%%%%%%%%%%%%%% section 4
%%%%%%%%%%%%%%%%%%%%%%%%%%%%%%%%%%%%

 \section{Some properties of correlation}\label{sec4}

We intend to investigate the positivity of the generalized Wigner--Yanase--Dyson skew information. We extend the results of
 \cite{bur1} and \cite{jfuji} to the positivity of the generalized Wigner--Yanase--Dyson skew information.\\
It is easy to see that the same monotonicity of a pair $(f,g)$ of continuous real-valued functions defined on a set $D$ is equal to the validity of the inequality $f(x)g(x)+f(y)g(y) \geq f(x)g(y)+f(y)g(x)$ for every $x,y \in D$.
\begin{theorem}\label{posiwigner1}
Let $\Phi:\mathcal{A}\longrightarrow \mathcal{B}$ be tracial positive linear map between von Neumann algebras. Then
\begin{eqnarray}\label{posiwigner}
\Phi(f(\rho) g(\rho)A^2) \geq \Phi(f(\rho)A g(\rho)A)
\end{eqnarray}
for each pair of same monotonic functions $f$ and $g$ defined on the spectrum of $\rho\in \mathcal{A}_h $ and each element $A \in \mathcal{A}_h$. In particular, 
$
{\rm I}_{\rho,\Phi}^{f,g}(A) \geq 0.
$
\end{theorem}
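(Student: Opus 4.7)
My plan is to reduce the inequality to a Schur-type identity by invoking the spectral decomposition of $\rho$, and then to deploy the tracial property of $\Phi$ twice: once to symmetrize a certain sum in the indices of the spectral projections, and once to rewrite its operator coefficients as manifestly positive elements of $\mathcal{B}$.

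Because $\mathcal{A}$ is a von Neumann algebra and $f,g$ are continuous on the compact set $\sigma(\rho)$, I would first reduce to the case where $\rho$ has finite spectrum, $\rho=\sum_{i=1}^{n}\lambda_i P_i$ with pairwise orthogonal projections $P_i\in\mathcal{A}$ summing to $I$; the general case then follows from the boundedness of $\Phi$ and norm-continuity of the continuous functional calculus along step-function approximants of $\rho$. Setting
\[
T_{ij}:=\Phi(P_iAP_jA)\in\mathcal{B},
\]
a direct expansion of $f(\rho)g(\rho)A^2$ and $f(\rho)Ag(\rho)A$ via $A^2=A\bigl(\sum_j P_j\bigr)A$ gives
\[
\Phi\bigl(f(\rho)g(\rho)A^2\bigr)-\Phi\bigl(f(\rho)Ag(\rho)A\bigr)=\sum_{i,j}f(\lambda_i)\bigl(g(\lambda_i)-g(\lambda_j)\bigr)T_{ij}.
\]

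The heart of the proof rests on two consequences of the cyclicity of $\Phi$. First, $T_{ij}=T_{ji}$, since by the tracial property $\Phi(AP_iAP_j)=\Phi(AP_jAP_i)$. Second, each $T_{ij}$ is positive in $\mathcal{B}$: using $P_i=P_i^{2}$ together with cyclicity,
\[
T_{ij}=\Phi(P_iAP_jA)=\Phi(P_iAP_jAP_i)=\Phi\bigl((P_jAP_i)^{*}(P_jAP_i)\bigr)\ge 0.
\]
Symmetrizing in $(i,j)$ via $T_{ij}=T_{ji}$, the displayed sum rewrites as
\[
\tfrac{1}{2}\sum_{i,j}\bigl(f(\lambda_i)-f(\lambda_j)\bigr)\bigl(g(\lambda_i)-g(\lambda_j)\bigr)T_{ij},
\]
whose scalar weights are non-negative by the same monotonicity hypothesis on $(f,g)$ and whose operator factors are positive by the second observation. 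Hence the difference in \eqref{posiwigner} is positive, and the concluding assertion ${\rm I}_{\rho,\Phi}^{f,g}(A)\ge 0$ is the specialization $B=A$.

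The main obstacle I anticipate is the spectral reduction step when $\sigma(\rho)$ is not discrete. One has to select step approximants $\rho_n\in\mathcal{A}_h$ (available because $\mathcal{A}$ is a von Neumann algebra and therefore contains the spectral projections of its self-adjoint elements) and verify that $f(\rho_n)\to f(\rho)$ and $g(\rho_n)\to g(\rho)$ in norm, so that both sides of \eqref{posiwigner} carry over to the limit under the positive, hence bounded, map $\Phi$. Once this technicality is settled, the algebraic core above is routine.
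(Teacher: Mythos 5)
Your proof is correct and follows essentially the same route as the paper's: reduce by approximation to a finite spectral decomposition of $\rho$, expand both sides over the spectral projections, and use the tracial property together with same monotonicity to compare the coefficients of $\Phi(P_iAP_jA)$ termwise. Your explicit check that $\Phi(P_iAP_jA)=\Phi\bigl((P_jAP_i)^{*}(P_jAP_i)\bigr)\ge 0$ makes precise a positivity fact that the paper's termwise inequality uses only implicitly, but the underlying argument is the same.
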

\begin{proof}
Since $\rho$ is self-adjoint, there exists a sequence of self-adjoint operators converging to $\rho$ such that each term $P$ of the sequence has the spectral representation $P=\sum_{i=1}^n \lambda_i E_i$, whenever $\lambda_i$, $ i=1,\dots , n$ are real numbers and $E_i$'s are mutually orthogonal projections with $ \sum_{i=1}^n E_i =I$. Hence, we only need to prove inequality \eqref{posiwigner} for a such self-adjoint operator $P=\sum_{i=1}^n \lambda_i E_i$. \\
First note that $f(P)g(P)A^2=\sum_{i=1}^n f(\lambda_i)g(\lambda_i) E_iA^2$. In addition,
\begin{align*}
f(P)Ag(P)A&=\sum_{i=1}^n f(\lambda_i) E_iA \sum_{j=1}^n g(\lambda_j) E_jA
=\sum_{i=1}^n f(\lambda_i)g(\lambda_i)E_iAE_iA \\ &\quad +\sum_{i< j}\left(f(\lambda_i)g(\lambda_j)E_iAE_jA +f(\lambda_j)g(\lambda_i)E_jAE_iA\right).
\end{align*}
 Since $ \sum_{i=1}^n E_i =I$, we can write
$
E_iA^2=E_iAE_1A+E_iAE_2A+\cdots+ E_iAE_nA.
$
Consequently,
{\footnotesize\begin{align*}
f(\lambda_i)g(\lambda_i)E_iA^2&=f(\lambda_i)g(\lambda_i)E_iAE_1A+\cdots+f(\lambda_i)g(\lambda_i) E_iAE_nA.
 \end{align*}
Hence,
\begin{align*}
&\Phi \left(f(P)g(P)A^2\right)\\
&=\Phi\left(\sum_{i=1}^n f(\lambda_i)g(\lambda_i)E_iA^2\right) \\
&=\Phi\left(\sum_{i=1}^n f(\lambda_i)g(\lambda_i)E_iAE_iA\right) +\Phi \left(\sum_{i< j}(f(\lambda_i)g(\lambda_i)E_iAE_jA +f(\lambda_j)g(\lambda_j)E_jAE_iA) \right)
\\
&=\Phi\left(\sum_{i=1}^n f(\lambda_i)g(\lambda_i)E_iAE_iA\right) +\Phi \left(\sum_{i< j}(f(\lambda_i)g(\lambda_i)E_iAE_jA +f(\lambda_j)g(\lambda_j)E_iAE_jA) \right) \\ & \tag{since $\Phi$ is tracial} 
\\ &\quad \geq
\Phi\left(\sum_{i=1}^n f(\lambda_i)g(\lambda_i)E_iAE_iA\right) +\Phi \left(\sum_{i< j}(f(\lambda_i)g(\lambda_j)E_iAE_jA +f(\lambda_j)g(\lambda_i)E_iAE_jA) \right) \\
& \tag{since $f,g$ are same monotonic} \\
 &= \Phi\left(\sum_{i=1}^n f(\lambda_i)E_iA\sum_{j=1}^ng(\lambda_j) E_jA \right)\\
 &=\Phi \left(f(P)Ag(P)A\right).
\end{align*}}
\end{proof}

\begin{theorem}\label{reposwing}
Let $\Phi: \mathcal{A}\to \mathcal{B}$ be a tracial positive linear map between von Neumann algebras. Then
 \begin{eqnarray*}
{\rm I}_{\rho,\Phi}^{f,g}(A)+{\rm I}_{\rho,\Phi}^{f,g}(A^*) \geq 0.
\end{eqnarray*}
for each pair of same monotonic functions $f$ and $g$ defined on the spectrum of a self-adjoint element $\rho$ and each operator $A \in \mathcal{A}$.
\end{theorem}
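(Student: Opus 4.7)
The plan is to reduce the statement for a general element $A \in \mathcal{A}$ to the self-adjoint case already established in Theorem \ref{posiwigner1}. Write $A = X + iY$ where $X := {\rm Re}(A) = (A + A^*)/2$ and $Y := {\rm Im}(A) = (A - A^*)/(2i)$ are self-adjoint elements of $\mathcal{A}$; then $A^* = X - iY$.

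The main computation I would carry out is showing that the imaginary cross-terms cancel when one adds ${\rm I}_{\rho,\Phi}^{f,g}(A)$ and ${\rm I}_{\rho,\Phi}^{f,g}(A^*)$. For the first (``diagonal'') summand, a direct expansion yields
\begin{eqnarray*}
A^*A + AA^* = 2(X^2 + Y^2),
\end{eqnarray*}
while for the second (``middle-insertion'') summand one checks similarly that
\begin{eqnarray*}
A^* g(\rho) A + A g(\rho) A^* = 2\bigl(X g(\rho) X + Y g(\rho) Y\bigr).
\end{eqnarray*}
Applying $\Phi$ after multiplying by $f(\rho)g(\rho)$ respectively $f(\rho)$, and using linearity of $\Phi$, one obtains the key identity
\begin{eqnarray*}
{\rm I}_{\rho,\Phi}^{f,g}(A) + {\rm I}_{\rho,\Phi}^{f,g}(A^*) = 2\,{\rm I}_{\rho,\Phi}^{f,g}(X) + 2\,{\rm I}_{\rho,\Phi}^{f,g}(Y).
\end{eqnarray*}

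Since $X$ and $Y$ are self-adjoint and the pair $(f,g)$ is same monotonic on the spectrum of $\rho$, Theorem \ref{posiwigner1} (applied to $X$ and then to $Y$) gives ${\rm I}_{\rho,\Phi}^{f,g}(X) \geq 0$ and ${\rm I}_{\rho,\Phi}^{f,g}(Y) \geq 0$, and the desired conclusion follows immediately from the identity above.

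I do not foresee a real obstacle: the argument is a straightforward real/imaginary decomposition together with the previously established positivity in the self-adjoint case. The only delicate points are bookkeeping the $\pm i$ signs so that the commutator terms $[X,Y]$, $Xg(\rho)Y$, $Yg(\rho)X$ cancel in pairs, and observing that the tracial property of $\Phi$ is not needed a second time here because it is already built into Theorem \ref{posiwigner1}.
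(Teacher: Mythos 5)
Your proof is correct, but it takes a different route from the paper. You use the Cartesian decomposition $A=X+iY$ with $X,Y$ self-adjoint, verify that the cross terms cancel so that
\begin{equation*}
{\rm I}_{\rho,\Phi}^{f,g}(A)+{\rm I}_{\rho,\Phi}^{f,g}(A^*)=2\,{\rm I}_{\rho,\Phi}^{f,g}(X)+2\,{\rm I}_{\rho,\Phi}^{f,g}(Y),
\end{equation*}
and then invoke Theorem \ref{posiwigner1} twice; the algebra checks out ($A^*A+AA^*=2(X^2+Y^2)$ and $A^*g(\rho)A+Ag(\rho)A^*=2(Xg(\rho)X+Yg(\rho)Y)$), and since the first terms of the skew information carry the common left factor $f(\rho)g(\rho)$ while the second carry $f(\rho)$, linearity of $\Phi$ gives the identity exactly. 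The paper instead uses a $2\times 2$ amplification: it defines $\Psi:M_2(\mathcal{A})\to M_2(\mathcal{B})$ by averaging the diagonal, notes that $\Psi$ is again tracial and positive, and applies Theorem \ref{posiwigner1} once to the self-adjoint off-diagonal matrix $\begin{bmatrix}0 & A\\ A^* & 0\end{bmatrix}$ with density $\begin{bmatrix}\rho & 0\\ 0 & \rho\end{bmatrix}$. Your argument is more elementary (no need to check that $\Psi$ is tracial positive or that the amplified $\rho$ has the same spectrum) and yields slightly more, namely an explicit decomposition of ${\rm I}_{\rho,\Phi}^{f,g}(A)+{\rm I}_{\rho,\Phi}^{f,g}(A^*)$ as twice a sum of two nonnegative quantities; the paper's off-diagonal trick is the more standard device for transferring a self-adjoint result to general elements and would also apply in situations where no convenient Cartesian identity is available.
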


\begin{proof}
Define the map $\Psi : M_2(\mathcal{A}) \to M_2(\mathcal{B})$ by
\begin{eqnarray}
\Psi\left( \begin{bmatrix}
A &B \\ C & D
\end{bmatrix}\right) = \begin{bmatrix}
\dfrac{\Phi(A) +\Phi(D)}{2}&0 \\ 0 & 0
\end{bmatrix}.
\end{eqnarray}

 Clearly, $\Psi$ is a tracial positive linear map. Let $A\in \mathcal{A}$. Then the matrices $\begin{bmatrix} 0 & A \\ A^* & 0 \end{bmatrix} $ and $\begin{bmatrix} \rho & 0 \\ 0 & \rho \end{bmatrix}$ are self-adjoint elements of $M_2(\mathcal{A})$. Furthermore, the spectrums of $\rho$ and $
 \begin{bmatrix}
 \rho & 0 \\ 0 & \rho
 \end{bmatrix}
 $ are equal. Hence,
\begin{align*}
&\hspace{-1cm} \begin{bmatrix}
 \Phi(f(\rho)g(\rho)A^*A)+\Phi(f(\rho)g(\rho)AA^*) & 0 \\ 0 & 0
 \end{bmatrix}\\
\\ &\ = 2\Psi \left(\begin{bmatrix}
 f(\rho)g(\rho) & 0 \\ 0 & f(\rho)g(\rho)
 \end{bmatrix} \begin{bmatrix}
 0 & A \\ A^* & 0
 \end{bmatrix} \begin{bmatrix}
 0 & A \\ A^* & 0
 \end{bmatrix}\right)
\\ \\ &\ \geq
2 \Psi \left(\begin{bmatrix}
 f(\rho) & 0 \\ 0 & f(\rho)
 \end{bmatrix} \begin{bmatrix}
 0 & A \\ A^* & 0
 \end{bmatrix} \begin{bmatrix}
 g(\rho) & 0 \\ 0 & g(\rho)
 \end{bmatrix}\begin{bmatrix}
 0 & A \\ A^* & 0
 \end{bmatrix}\right)
 \\ \tag{by Theorem \ref{posiwigner1}}\\ \\ &\ = \begin{bmatrix}
 \Phi(f(\rho) A^*g(\rho) A) + \Phi(f(\rho) Ag(\rho) A^*) & 0 \\ 0 & 0
 \end{bmatrix},
\end{align*}
which ensures that
\begin{eqnarray*}
{\rm I}_{\rho,\Phi}^{f,g}(A)+{\rm I}_{\rho,\Phi}^{f,g}(A^*) \geq 0.
\end{eqnarray*}
\end{proof}
\begin{definition}\label{semi}
Let $\Phi: \mathcal{A}\to \mathcal{B}$ be a tracial positive linear map, and let $\rho\in \mathcal{A}_h$. Then for elements $A,B \in \mathcal{A}$, we set
\begin{eqnarray*}
{\rm Corr}_{\rho,\Phi}^{\prime f,g}(A,B):=\dfrac{1}{2} \left({\rm Corr}_{\rho,\Phi}^{f,g}(A,B)+{\rm Corr}_{\rho,\Phi}^{{f,g}}(B^*,A^*)\right)
\end{eqnarray*}
 and ${\rm I}_{\rho,\Phi}^{\prime f,g}(A):={\rm Corr}_{\rho,\Phi}^{\prime f,g}(A,A)$, which are called the generalized symmetric correlation and the generalized symmetric Wigner--Yanase--Dyson skew information, respectively. \\
It is easy to check that ${\rm Corr}_{\rho,\Phi}^{\prime f,g}(A,B)$ has the following properties:
\begin{itemize}
\item [(i)] ${\rm Corr}_{\rho,\Phi}^{\prime f,g}(A,A) \geq 0, {\rm \ for\ every}\ A\in \mathcal{A}, \ {\rm (see\ Theorem\ \ref{reposwing})}$,
\item [(ii)] ${\rm Corr}_{\rho,\Phi}^{\prime f,g}(A,B+\lambda C) ={\rm Corr}_{\rho,\Phi}^{\prime f,g}(A,B) +\lambda {\rm Corr}_{\rho,\Phi}^{\prime f,g}(A,C), {\rm \ for\ all}\ A,B\\ {\rm in}\ \mathcal{A} {\rm \ and\ every\ } \lambda \in \mathbb{C}$,
\item [(iii)] ${\rm Corr}_{\rho,\Phi}^{\prime f,g}(A,B)^*={\rm Corr}_{\rho,\Phi}^{\prime f,g}(B,A)$.
\end{itemize}
In addition, if $A$ and $B$ are self-adjoint, then $ {\rm Corr}_{\rho,\Phi}^{\prime f,g}(A,B) ={\rm Re} \left({\rm Corr}_{\rho,\Phi}^{f,g}(A,B)\right)$ and ${\rm I}_{\rho,\Phi}^{\prime f,g}(A)={\rm I}_{\rho,\Phi}^{{f,g}}(A)$.
\end{definition}
Now we give a generalization of inequality (\ref{semic}) for a tracial positive linear map. The following theorem gives a Cauchy--Schwarz type inequality for the generalized correlation.
\begin{theorem}
Let $\Phi: \mathcal{A}\to \mathcal{B}$ be a tracial positive linear map between von Neumann algebras. Then
\[\begin{bmatrix}
{\rm I}_{\rho,\Phi}^{f,g}(A) & {\rm Corr}_{\rho,\Phi}^{\prime f,g}(A,B) \\ {\rm Corr}_{\rho,\Phi}^{\prime f,g}(B,A) & {\rm I}_{\rho,\Phi}^{f,g}(B)
\end{bmatrix}=
\begin{bmatrix}
{\rm I}_{\rho,\Phi}^{f,g}(A) & {\rm Re} \left({\rm Corr}_{\rho,\Phi}^{f,g}(A,B)\right) \\ {\rm Re} \left({\rm Corr}_{\rho,\Phi}^{f,g}(A,B)\right) & {\rm I}_{\rho,\Phi}^{f,g}(B)
\end{bmatrix}\geq 0
\]
for any pair of same monotonic functions $f$ and $g$ defined on the spectrum of $\rho$ and any $A,B \in \mathcal{A}_h$.
\end{theorem}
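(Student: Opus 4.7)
The plan is to establish the identity of the two matrices and their common positivity separately.

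For the identity, the last sentence of Definition \ref{semi} gives ${\rm Corr}_{\rho,\Phi}^{\prime f,g}(A,B)={\rm Re}({\rm Corr}_{\rho,\Phi}^{f,g}(A,B))$ when $A,B$ are self-adjoint. Applying the same sentence with the roles of $A$ and $B$ swapped, together with the routine traciality identity ${\rm Corr}_{\rho,\Phi}^{f,g}(B,A)={\rm Corr}_{\rho,\Phi}^{f,g}(A,B)^{*}$, shows that the $(2,1)$ entry ${\rm Corr}_{\rho,\Phi}^{\prime f,g}(B,A)$ also equals ${\rm Re}({\rm Corr}_{\rho,\Phi}^{f,g}(A,B))$; the diagonal entries are identical by definition.

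For positivity I would follow the spectral reduction used in the proof of Theorem \ref{posiwigner1}: norm-approximating $\rho$ by self-adjoint elements $P=\sum_{k=1}^{n}\lambda_{k}E_{k}$ with mutually orthogonal projections $E_{k}$ summing to $I$, and passing to the limit reduces the problem to $\rho$ of this form. Inserting $I=\sum_{j}E_{j}$, using $E_{k}^{2}=E_{k}$, and using the tracial symmetries $\Phi(E_{i}XE_{j}X)=\Phi(E_{j}XE_{i}X)$ and $\Phi(E_{i}XE_{j}Y)=\Phi(E_{j}YE_{i}X)$, a direct expansion rewrites the target matrix as $\sum_{i<j}\gamma_{ij}\,T_{ij}$, where
\[
T_{ij}=\begin{bmatrix}\Phi(E_{i}AE_{j}A) & \tfrac{1}{2}[\Phi(E_{i}AE_{j}B)+\Phi(E_{i}BE_{j}A)]\\ \tfrac{1}{2}[\Phi(E_{i}AE_{j}B)+\Phi(E_{i}BE_{j}A)] & \Phi(E_{i}BE_{j}B)\end{bmatrix}
\]
and $\gamma_{ij}:=(f(\lambda_{i})-f(\lambda_{j}))(g(\lambda_{i})-g(\lambda_{j}))\geq 0$ by the same-monotonicity of $(f,g)$.

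It remains to show $T_{ij}\geq 0$. Since $\Phi$ is completely positive by Lemma \ref{choi}, $\Phi_{2}$ is positive. Applying $\Phi_{2}$ to the manifestly positive rank-one block
\[
\begin{bmatrix}E_{i}AE_{j}\\ E_{i}BE_{j}\end{bmatrix}\begin{bmatrix}E_{j}AE_{i}& E_{j}BE_{i}\end{bmatrix}\geq 0
\]
and simplifying by $E_{i}^{2}=E_{i}$ and traciality yields the positivity of $\begin{bmatrix}\Phi(E_{i}AE_{j}A) & \Phi(E_{i}AE_{j}B)\\ \Phi(E_{i}BE_{j}A) & \Phi(E_{i}BE_{j}B)\end{bmatrix}$; running the same argument with the roles of $i$ and $j$ swapped and invoking traciality once more produces the companion block with off-diagonal entries interchanged, and averaging the two gives $T_{ij}\geq 0$. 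Summing $\sum_{i<j}\gamma_{ij}T_{ij}$ then expresses the target matrix as a non-negative scalar combination of positive $M_{2}(\mathcal{B})$-matrices, completing the proof. The main obstacle is the tracial bookkeeping in this swap-and-average step: it is exactly what symmetrizes the off-diagonal of $T_{ij}$ and thereby forces ${\rm Re}({\rm Corr}_{\rho,\Phi}^{f,g}(A,B))$, rather than ${\rm Corr}_{\rho,\Phi}^{f,g}(A,B)$ itself, to appear in the resulting matrix inequality.
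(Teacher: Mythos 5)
Your proposal is correct and follows essentially the same route as the paper: spectral reduction of $\rho$ to $\sum_k\lambda_kE_k$, expansion of all four entries in terms of $\Phi(E_iXE_jY)$, and comparison of the coefficients $f_ig_i+f_jg_j$ versus $f_ig_j+f_jg_i$, whose difference is exactly your $\gamma_{ij}=(f(\lambda_i)-f(\lambda_j))(g(\lambda_i)-g(\lambda_j))\geq 0$. The one point where you go beyond the paper is the explicit $\Phi_2$-plus-swap-and-average argument for $T_{ij}\geq 0$; the paper's coefficient-comparison step tacitly relies on the positivity of these $2\times 2$ blocks without proving it, so your addition usefully closes that small gap.
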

\begin{proof}
By a similar argument as the first paragraph of the proof of Theorem \ref{posiwigner1} we only need to prove the theorem in the case $\rho = \sum_{i=1}^n \lambda_i E_i$, where $\lambda_i$ ; $ i=1,\dots , n$ are real numbers and $E_i$'s are orthogonal projections with 
$\sum_{i=1}^n E_i =I$.  We denote $f(\lambda_i), g(\lambda_j)$ by $f_i, g_j$, respectively. And let $\delta_{ij} = f_ig_i+f_jg_j$,~~
$\xi_{ij} = f_ig_j+f_jg_i$,~~$\Delta_{ij} = \Phi(E_iAE_jB)+\Phi(E_jAE_iB)$,~~ $V = \Phi(f(\rho)Ag(\rho)B)+\Phi(f(\rho)Bg(\rho)A)$ and 
$W = \Phi(f(\rho)g(\rho)AB)+\Phi(f(\rho)g(\rho)BA)$. 
Then we have 
\begin{align*}
\Phi(f(\rho)g(\rho)A^2) &= \sum_{i=1}^n f_ig_i\Phi(E_iA^2)\\
&= \sum_{i=1}^n \sum_{j=1}^n f_ig_i\Phi(E_iAE_jA) \tag{since $\sum_{j=1}^n E_j =I$}\\
&=\sum_{i=1}^n f_ig_i\Phi(E_iAE_iA)+\sum_{i<j} f_ig_i\Phi(E_iAE_jA)+\sum_{i>j} f_ig_i\Phi(E_iAE_jA) \\
&= \sum_{i=1}^n f_ig_i\Phi(E_iAE_iA)+\sum_{i<j} f_ig_i\Phi(E_iAE_jA)+\sum_{i<j} f_jg_j\Phi(E_jAE_iA) \\
&= \sum_{i=1}^n f_ig_i\Phi(E_iAE_iA)+\sum_{i<j} (f_ig_i+f_jg_j)\Phi(E_iAE_jA) \\
&= \sum_{i=1}^n f_ig_i\Phi(E_iAE_iA)+\sum_{i<j} \delta_{ij}\Phi(E_iAE_jA). 
\end{align*}
Similarly, we have $\Phi(f(\rho)g(\rho)B^2) = \sum_{i=1}^n f_ig_i\Phi(E_iBE_iB)+\sum_{i<j} \delta_{ij}\Phi(E_iBE_jB)$. 
\begin{align*}
\Phi(f(\rho)Ag(\rho)A) &= \sum_{i=1}^n \sum_{j=1}^n f_ig_j\Phi(E_iAE_jA) \\
&= \sum_{i=1}^n f_ig_i\Phi(E_iAE_iA)+\sum_{i<j} f_ig_j\Phi(E_iAE_jA)+\sum_{i>j} f_ig_j\Phi(E_iAE_jA) \\
&= \sum_{i=1}^n f_ig_i\Phi(E_iAE_iA)+\sum_{i<j} f_ig_j\Phi(E_iAE_jA)+\sum_{i<j} f_jg_i\Phi(E_jAE_iA) \\
&= \sum_{i=1}^n f_ig_i\Phi(E_iAE_iA)+\sum_{i<j} (f_ig_j+f_jg_i)\Phi(E_iAE_jA) \\
&=\sum_{i=1}^n f_ig_i\Phi(E_iAE_iA)+\sum_{i<j} \xi_{ij}\Phi(E_iAE_jA). 
\end{align*}
Similarly, we have $\Phi(f(\rho)Bg(\rho)B) = \sum_{i=1}^n f_ig_i\Phi(E_iBE_iB)+\sum_{i<j} \xi_{ij}\Phi(E_iBE_jB)$. 
And also 
\begin{align*}
W &=\Phi(f(\rho)g(\rho)AB) + \Phi(f(\rho)g(\rho)BA) \\
&= \sum_{i=1}^n f_ig_i\Phi(E_iAB)+\sum_{i=1}^n f_ig_i\Phi(E_iBA) \\
&= \sum_{i=1}^n \sum_{j=1}^n f_ig_i\Phi(E_iAE_jB)+\sum_{i=1}^n \sum_{j=1}^n f_ig_i\Phi(E_iBE_jA) \\
&= \sum_{i=1}^n f_ig_i\Phi(E_iAE_iB)+\sum_{i<j} f_ig_i\Phi(E_iAE_jB)+\sum_{i>j} f_ig_i\Phi(E_iAE_jB) \\
& \quad + \sum_{i=1}^n f_ig_i\Phi(E_iBE_iA)+\sum_{i<j} f_ig_i\Phi(E_iBE_jA)+\sum_{i>j} f_ig_i\Phi(E_iBE_jA) \\
&= \sum_{i=1}^n f_ig_i\Phi(E_iAE_iB)+\sum_{i<j} f_ig_i\Phi(E_iAE_jB)+\sum_{i<j} f_jg_j\Phi(E_jAE_iB) \\
& \quad + \sum_{i=1}^n f_ig_i\Phi(E_iBE_iA)+\sum_{i<j} f_ig_i\Phi(E_iBE_jA)+\sum_{i<j} f_jg_j\Phi(E_jBE_iA) \\
&= \sum_{i=1}^n f_ig_i\Phi(E_iAE_iB)+\sum_{i=1}^n f_ig_i\Phi(E_iBE_iA) \\
& \quad + \sum_{i<j} (f_ig_i+f_jg_j)\Phi(E_iAE_jB)+\sum_{i<j} (f_ig_i+f_jg_j)\Phi(E_jAE_iB) \\
&=2 \sum_{i=1}^n f_ig_i\Phi(E_iAE_iB)+\sum_{i<j} \delta_{ij}\Phi(E_iAE_jB)+\sum_{i<j} \delta_{ij}\Phi(E_jAE_iB) \\
&= 2 \sum_{i=1}^n f_ig_i\Phi(E_iAE_iB)+\sum_{i<j} \delta_{ij}\Delta_{ij}.
\end{align*}
Then $\frac{1}{2}W = \sum_{i=1}^n f_ig_i\Phi(E_iAE_iB) + \frac{1}{2} \sum_{i<j} \delta_{ij}\Delta_{ij}$. Similarly, we have 
$\frac{1}{2}V = \sum_{i=1}^n f_ig_i\Phi(E_iAE_iB)+\frac{1}{2} \sum_{i<j} \xi_{ij}\Delta_{ij}$. Therefore we can write \\
{\footnotesize
\begin{align*}
& \begin{bmatrix}
\Phi(f(\rho)g(\rho)A^2 ) & \frac{1}{2}W \\ \frac{1}{2}W & \Phi(f(\rho)g(\rho)B^2)
\end{bmatrix} \\ &=\begin{bmatrix}
\sum_{i=1}^n f_ig_i \Phi(E_iAE_iA)+\sum_{i<j} \delta_{ij} \Phi(E_iAE_jA) & \sum_{i=1}^n f_ig_i \Phi(E_iAE_iB)+ \frac{1}{2} \sum_{i<j} \delta_{ij} \Delta_{ij} \\ \sum_{i=1}^n f_ig_i \Phi(E_iAE_iB) + \frac{1}{2} \sum_{i<j} \delta_{ij} \Delta_{ij} & \sum_{i=1}^n f_ig_i \Phi(E_iBE_iB) + \sum_{i<j} \delta_{ij} \Phi(E_iBE_jB)
\end{bmatrix} \\ &=\begin{bmatrix}
\sum_{i=1}^n f_ig_i \Phi(E_iAE_iA) & \sum_{i=1}^n f_ig_i \Phi(E_iAE_iB) \\
\sum_{i=1}^n f_ig_i \Phi(E_iAE_iB) & \sum_{i=1}^n f_ig_i \Phi(E_iBE_iB)
\end{bmatrix} + \begin{bmatrix}
\sum_{i<j} \delta_{ij} \Phi(E_iAE_jA) & \frac{1}{2} \sum_{i<j} \delta_{ij} \Delta_{ij} \\
\frac{1}{2} \sum_{i<j} \delta_{ij} \Delta_{ij} & \sum_{i<j} \delta_{ij} \Phi(E_iBE_jB)
\end{bmatrix} \\ &=\begin{bmatrix}
\sum_{i=1}^n f_ig_i \Phi(E_iAE_iA) & \sum_{i=1}^n f_ig_i \Phi(E_iAE_iB) \\
\sum_{i=1}^n f_ig_i \Phi(E_iAE_iB) & \sum_{i=1}^n f_ig_i \Phi(E_iBE_iB) 
\end{bmatrix} + \frac{1}{2} \sum_{i<j} \begin{bmatrix}
\delta_{ij} & 0 \\ 0 & \delta_{ij} 
\end{bmatrix} \begin{bmatrix}
2 \Phi(E_iAE_jA) & \Delta_{ij} \\ \Delta_{ij} & 2 \Phi(E_iBE_jB)
\end{bmatrix} \\ &\geq \begin{bmatrix}
\sum_{i=1}^n f_ig_i \Phi(E_iAE_iA) & \sum_{i=1}^n f_ig_i \Phi(E_iAE_iB) \\
\sum_{i=1}^n f_ig_i \Phi(E_iAE_iB) & \sum_{i=1}^n f_ig_i \Phi(E_iBE_iB)
\end{bmatrix} + \frac{1}{2} \sum_{i<j} \begin{bmatrix}
\xi_{ij} & 0 \\ 0 & \xi_{ij}
\end{bmatrix} \begin{bmatrix}
2 \Phi(E_iAE_jA) & \Delta_{ij} \\ \Delta_{ij} & 2 \Phi(E_iBE_jB)
\end{bmatrix}
\\ & \tag{since $(f,g)$ are same monotonic}
 \\ &=\begin{bmatrix}
\sum_{i=1}^n f_ig_i \Phi(E_iAE_iA) & \sum_{i=1}^n f_ig_i \Phi(E_iAE_iB) \\
\sum_{i=1}^n f_ig_i \Phi(E_iAE_iB) & \sum_{i=1}^n f_ig_i \Phi(E_iBE_iB)
\end{bmatrix} + \begin{bmatrix}
\sum_{i<j} \xi_{ij} \Phi(E_iAE_jA) & \frac{1}{2} \sum_{i<j} \xi_{ij} \Delta_{ij} \\
\frac{1}{2} \sum_{i<j} \xi_{ij} \Delta_{ij} & \sum_{i<j} \xi_{ij} \Phi(E_iBE_jB)
\end{bmatrix} \\ &=\begin{bmatrix}
\Phi(f(\rho)Ag(\rho)A) & \frac{1}{2}V \\ \frac{1}{2} V & \Phi(f(\rho)Bg(\rho)B)
\end{bmatrix}. 
\end{align*}}
Hence,
{\footnotesize
\begin{align*}
&\begin{bmatrix}
\Phi(f(\rho)g(\rho)A^2 ) & \frac{1}{2}\left(\Phi(f(\rho)g(\rho)AB) +\Phi(f(\rho)g(\rho)BA)\right) \\ \frac{1}{2}\left(\Phi(f(\rho)g(\rho)BA) +\Phi(f(\rho)g(\rho)AB)\right) & \Phi(f(\rho)g(\rho)B^2)
\end{bmatrix} \\ &\quad \geq \begin{bmatrix}\Phi(f(\rho)Ag(\rho)A ) &\frac{1}{2}\left(\Phi(f(\rho)Ag(\rho)B) +\Phi(f(\rho)Bg(\rho)A)\right) \\ \frac{1}{2}\left(\Phi(f(\rho)Bg(\rho)A) +\Phi(f(\rho)Ag(\rho)B)\right) & \Phi(f(\rho)Bg(\rho)B).
\end{bmatrix},
\end{align*}}
which yields the required result.
\end{proof}
\begin{corollary}\label{corcor}
Let $\Phi: \mathcal{A}\to \mathcal{B}$ be a tracial positive linear map between von Neumann algebras. Then
\begin{eqnarray*}
\left|{\rm Re} \left({\rm Corr}_{\rho,\Phi}^{f,g}(A,B)\right)\right|^2 \leq {\rm I}_{\rho,\Phi}^{f,g}(A) \left\|{\rm I}_{\rho,\Phi}^{f,g}(B)\right\|
\end{eqnarray*}
for any pair of same monotonic functions $f$ and $g$ defined on the spectrum of $\rho$ and any operators $A,B \in \mathcal{A}_h$.\\
In particular, if $\Phi(\mathcal{A})$ is commutative, then the above inequality can be refined to
\begin{eqnarray*}
\left|{\rm Re} \left({\rm Corr}_{\rho,\Phi}^{f,g}(A,B)\right)\right|^2 \leq {\rm I}_{\rho,\Phi}^{f,g}(A) {\rm I}_{\rho,\Phi}^{f,g}(B).
\end{eqnarray*}
\end{corollary}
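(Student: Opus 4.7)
The proof relies entirely on the block-matrix positivity established in the preceding theorem. Writing $P := {\rm I}_{\rho,\Phi}^{f,g}(A)$, $Q := {\rm I}_{\rho,\Phi}^{f,g}(B)$, and $R := {\rm Re}\left({\rm Corr}_{\rho,\Phi}^{f,g}(A,B)\right)$, the theorem supplies
\[
\begin{bmatrix} P & R \\ R & Q \end{bmatrix} \geq 0,
\]
with $P, Q \geq 0$ as diagonal entries of a positive block matrix and $R = R^*$ since it is a real part; the latter in particular gives $|R|^2 = R^*R = R^2$. The plan is to read off the two scalar-type inequalities directly from this matrix positivity.

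For the general inequality, I first assume $Q$ is invertible. Lemma \ref{matrixp} then yields $P \geq R Q^{-1} R$. Since $Q \leq \|Q\|\, I$, one has $Q^{-1} \geq \|Q\|^{-1} I$, and conjugation by the self-adjoint element $R$ preserves this order, so
\[
R Q^{-1} R \geq \|Q\|^{-1} R^2 = \|Q\|^{-1} |R|^2.
\]
Combining the two estimates gives $|R|^2 \leq \|Q\|\, P$, which is the first claim. For arbitrary $Q \geq 0$, I replace $Q$ by $Q + \epsilon I$ (the perturbed block matrix is still positive), apply the same argument, and let $\epsilon \to 0^+$, invoking continuity of the norm $\|Q + \epsilon I\| \to \|Q\|$.

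For the refined inequality under the assumption that $\Phi(\mathcal{A})$ is commutative, the operators $P, Q, R$ all belong to the commutative set $\Phi(\mathcal{A})$ and hence mutually commute. Assuming first $Q > 0$, Lemma \ref{matrixp} again produces $P \geq R Q^{-1} R$, but commutativity now rewrites the right-hand side as $R^2 Q^{-1} = |R|^2 Q^{-1}$; multiplying on the right by the positive commuting element $Q$ yields $PQ \geq |R|^2$. The non-invertible case is handled by the same $Q + \epsilon I$ perturbation. The only mildly delicate point in the whole argument is this passage from a strictly positive $Q$ (required by Lemma \ref{matrixp}) to a merely positive $Q$, resolved by a routine $\epsilon$-perturbation; everything else is a direct manipulation of the $2\times 2$ positivity through Lemma \ref{matrixp} together with the elementary bound $Q^{-1} \geq \|Q\|^{-1} I$.
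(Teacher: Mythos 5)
Your proposal is correct and follows exactly the route the paper intends: the corollary is stated without a separate proof precisely because it is the standard extraction of a Cauchy--Schwarz--type bound from the positive $2\times 2$ block matrix of the preceding theorem via Lemma \ref{matrixp}, the estimate $Q^{-1}\geq \|Q\|^{-1}I$, and commutativity for the refined version. Your explicit handling of the non-invertible case by the $Q+\epsilon I$ perturbation is a welcome detail that the paper leaves tacit.
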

The next result, which is a generalization of inequality \eqref{semic} for tracial conditional expectations can be derived from Corollary \ref{corcor} (in the case that $\mathcal{A}$ and $\mathcal{B}$ are von Neumann algebras) but we prove it in a different fashion. \\
Let $\mathcal{B}$ be a $C^*$-subalgebra of $C^*$-algebra $\mathcal{A}$. If $\mathcal{E}:\mathcal{A}\to \mathcal{B}$ is a tracial conditional expectation, then ${\rm ran} (\mathcal{E}) \subseteq \mathcal{Z}(\mathcal{B})$. Indeed, if $A\in \mathcal{A}$ and $B\in \mathcal{B}$, then we have
\begin{eqnarray}\label{concom}
B \mathcal{E}(A) = \mathcal{E}(BA)=\mathcal{E}(AB)= \mathcal{E}(A)B.
\end{eqnarray}

If $(\mathcal{X},\langle \cdot ,\cdot \rangle)$ is a semi-inner product module over a $C^*$-algebra $\mathcal{A}$, then
the Cauchy--Schwarz inequality, for $x,y\in \mathcal{X}$, asserts that $\langle x,y\rangle \langle y,x\rangle\leq \|\langle y,y\rangle \| \langle x,x\rangle$ (see \cite{lanc}). 
In the case that $\langle y,y\rangle \in \mathcal{Z}(\mathcal{A})$, where $\mathcal{Z}(\mathcal{A})$ is the center of the $C^*$-algebra $\mathcal{A}$, the latter inequality turns into (see \cite{il1})
 \begin{eqnarray}\label{cuachysharp}
 \langle x,y\rangle \langle y,x\rangle\leq \langle y,y\rangle \langle x,x\rangle .
\end{eqnarray}

\begin{theorem}
Let $\mathcal{A}$ be a $C^*$-algebra, and let $\mathcal{B}$ be $C^*$-subalgebra of $\mathcal{A}$. If $\mathcal{E}:\mathcal{A}\to \mathcal{B}$ is a tracial conditional expectation, then
\begin{eqnarray*}
\left|{\rm Corr}_{\rho,\mathcal{E}}^{\prime f,g}(A,B)\right|^2=\left|{\rm Re} \left({\rm Corr}_{\rho,\mathcal{E}}^{{f,g}}(A,B)\right)\right|^2 \leq {\rm I}_{\rho,\mathcal{E}}^{{f,g}}(A) {\rm I}_{\rho,\mathcal{E}}^{{f,g}}(B)
\end{eqnarray*}
for all self-adjoint elements $A,B \in \mathcal{A}$ and all positive elements $\rho \in \mathcal{A}$. In particular, if $f =g$ , then the above inequality holds for all normal elements $A,B \in \mathcal{A}$.
\end{theorem}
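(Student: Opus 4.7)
The plan is to recognize that the sesquilinear form ${\rm Corr}_{\rho,\mathcal{E}}^{\prime f,g}(\cdot,\cdot)$ is a semi-inner product on $\mathcal{A}$ whose values lie in the center $\mathcal{Z}(\mathcal{B})$, and then to apply the sharp Cauchy--Schwarz inequality \eqref{cuachysharp}, which is precisely the tool available when the inner product is center-valued.

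First I would invoke identity \eqref{concom} to conclude that $\mathcal{E}(\mathcal{A}) \subseteq \mathcal{Z}(\mathcal{B})$; in particular ${\rm Corr}_{\rho,\mathcal{E}}^{f,g}(A,B)$, ${\rm Corr}_{\rho,\mathcal{E}}^{\prime f,g}(A,B)$, and ${\rm I}_{\rho,\mathcal{E}}^{f,g}(A)$, ${\rm I}_{\rho,\mathcal{E}}^{f,g}(B)$ all lie in $\mathcal{Z}(\mathcal{B})$. The three properties recorded in Definition \ref{semi} then say exactly that ${\rm Corr}_{\rho,\mathcal{E}}^{\prime f,g}(\cdot,\cdot)$ is conjugate-symmetric, linear in the second slot, and positive, so that $\mathcal{A}$ becomes a semi-inner product module over $\mathcal{Z}(\mathcal{B})$ under this form.

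With this framework in place, I would apply \eqref{cuachysharp} with $x = A$ and $y = B$. Since $\langle B, B\rangle = {\rm I}_{\rho,\mathcal{E}}^{f,g}(B)$ is central, the sharp bound yields
\[
{\rm Corr}_{\rho,\mathcal{E}}^{\prime f,g}(A,B)\,{\rm Corr}_{\rho,\mathcal{E}}^{\prime f,g}(B,A) \leq {\rm I}_{\rho,\mathcal{E}}^{f,g}(B)\,{\rm I}_{\rho,\mathcal{E}}^{f,g}(A).
\]
For self-adjoint $A, B$ and real-valued $f, g$, Definition \ref{semi} identifies ${\rm Corr}_{\rho,\mathcal{E}}^{\prime f,g}(A,B)$ with ${\rm Re}\bigl({\rm Corr}_{\rho,\mathcal{E}}^{f,g}(A,B)\bigr)$, a self-adjoint central element, so property (iii) makes the left-hand side exactly $\bigl|{\rm Re}({\rm Corr}_{\rho,\mathcal{E}}^{f,g}(A,B))\bigr|^2$, and the centrality of the two factors on the right lets them commute into the claimed product form.

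For the particular case $f = g$ with normal $A, B$, the plan is to observe that traciality yields the explicit representation ${\rm I}_{\rho,\mathcal{E}}^{f,f}(A) = \tfrac{1}{2}\,\mathcal{E}\bigl([f(\rho),A]^*[f(\rho),A]\bigr)$, so the required positivity holds without any same-monotonicity hypothesis; decomposing $A = A_1 + iA_2$ and $B = B_1 + iB_2$ into their commuting self-adjoint parts and invoking sesquilinearity of ${\rm Corr}^{\prime f,f}$ should reduce the bound to the self-adjoint case just established. The main obstacle I anticipate is justifying property (i), namely the positivity of ${\rm I}_{\rho,\mathcal{E}}^{f,g}$, in the general $C^*$-algebra setting of the theorem, since Theorem \ref{reposwing} is stated only for von Neumann algebras; the likely remedy is to exploit that $\mathcal{E}$ factors through the commutative $C^*$-algebra $\mathcal{Z}(\mathcal{B})$ and appeal to the scalar Bourin--Fujii inequality recalled in the preliminaries.
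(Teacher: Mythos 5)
Your proposal matches the paper's proof: both use \eqref{concom} to realize ${\rm Corr}_{\rho,\mathcal{E}}^{\prime f,g}(\cdot,\cdot)$ as a center-valued semi-inner product making $\mathcal{A}$ a semi-inner product $\mathcal{B}$-module, and then apply the sharp Cauchy--Schwarz inequality \eqref{cuachysharp}. The only divergences are minor: the paper disposes of the $f=g$ normal case simply by noting ${\rm I}_{\rho,\mathcal{E}}^{f,g}(A)={\rm I}_{\rho,\mathcal{E}}^{f,g}(A^*)$ for normal $A$ (so your decomposition into self-adjoint parts is unnecessary --- \eqref{cuachysharp} applies directly to $x=A$, $y=B$), and the paper likewise leans on Definition \ref{semi}(i), i.e.\ Theorem \ref{reposwing}, for positivity without addressing the von Neumann hypothesis you rightly flag.
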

\begin{proof}
Let us define the $\mathcal{B}$-valued map $\langle\cdot , \cdot \rangle :\mathcal{A} \times \mathcal{A} \to \mathcal{B}$ by $\langle A ,B \rangle={\rm Corr}_{\rho,\mathcal{E}}^{\prime f,g}(A,B)$.
If $A,B \in \mathcal{A}$ and $C\in \mathcal{B}$, then
\begin{align*}
\langle A ,BC \rangle &= {\rm Corr}_{\rho,\mathcal{E}}^{\prime f,g}(A,BC)\\ &=\frac{1}{2} \left({\rm Corr}_{\rho,\mathcal{E}}^{{f,g}}(A,BC)+{\rm Corr}_{\rho,\mathcal{E}}^{{f,g}}(C^*B^*,A^*)\right)\\ &= \frac{1}{2}\Big(\mathcal{E}(f(\rho)g(\rho) A^*BC)-\mathcal{E}(f(\rho)A^* g(\rho) BC) +\mathcal{E}(f(\rho) g(\rho) BCA^*) \\ & \ \ -\mathcal{E}(f(\rho)BC g(\rho) A^*)\Big)\\ &= \frac{1}{2}\Big(\mathcal{E}(f(\rho) g(\rho)A^*BC)-\mathcal{E}(f(\rho)A^* g(\rho) BC) +\mathcal{E}(CA^*f(\rho)g(\rho) B)\\ & \ -\mathcal{E}(C f(\rho) A^* g(\rho) B)\Big)
\\ & \tag{since $\mathcal{E}$ is tracial} \\ &= \frac{1}{2}\Big(\mathcal{E}(f(\rho) g(\rho) A^*B)C-\mathcal{E}(f(\rho)A^* g(\rho) B)C +\mathcal{E}(f(\rho)g(\rho) BA^*)C\\ &\ \ -\mathcal{E}(f(\rho) B g(\rho) A^*)C\Big) \tag{by equality (\ref{concom})}\\ &= {\rm Corr}_{\rho,\mathcal{E}}^{\prime f,g}(A,B)C \\ &= \langle A ,B \rangle C.
\end{align*}
Therefore, according to Definition \ref{semi}, we see that $(\mathcal{A},\langle\cdot , \cdot \rangle)$ is a semi-inner product $\mathcal{B}$-module. Furthermore, equality (\ref{concom}) implies that ${\rm ran}(\mathcal{E})$ is a subset of the center of $\mathcal{A}$ . Let $A$ and $B$ be self-adjoint elements in $\mathcal{A}$. Then we get
\begin{align*}
\left|{\rm Re} ({\rm Corr}_{\rho,\mathcal{E}}^{{f,g}}(A,B))\right|^2&= \left|\dfrac{1}{2}\left( {\rm Corr}_{\rho,\mathcal{E}}^{{f,g}}(A,B)+{\rm Corr}_{\rho,\mathcal{E}}^{{f,g}}(B,A)\right)\right|^2 \\ &= \left| {\rm Corr}_{\rho,\mathcal{E}}^{\prime f,g}(A,B)\right|^2\\ &= \left| \langle A ,B \rangle \right|^2 \\ &\leq \langle A ,A \rangle \langle B,B\rangle \tag{by inequality (\ref{cuachysharp}} \\ &= {\rm I}_{\rho,\mathcal{E}}^{{f,g}}(A) {\rm I}_{\rho,\mathcal{E}}^{{f,g}}(B) \tag{since $A$ and $B$ are self-adjoint}.
\end{align*}
Finally, it is easy to see that if $f=g$ and $A, B$ are normal operators, then ${\rm I}_{\rho,\mathcal{E}}^{{f,g}}(A)= {\rm I}_{\rho,\mathcal{E}}^{{f,g}}(A^*)$ and ${\rm I}_{\rho,\mathcal{E}}^{{f,g}}(B)={\rm I}_{\rho,\mathcal{E}}^{{f,g}}(B^*)$.
\end{proof}
%%%%%%%%%%%%%%%%%%%%%%%%%%%%%%%%%%%%%%
%%%%%%%%%%%%%%%%%%%%%%%%%%%%%%%%%%%%%% section 5
%%%%%%%%%%%%%%%%%%%%%%%%%%%%%%%%%%%%%%
\section{Some relations between covariance and correlation}\label{sec5}
In this section, we give some relations between covariance and correlation.

\begin{theorem}\label{mogh}
Let $\Phi$ be a tracial positive linear map from a $C^*$-algebra $\mathcal{A}$ into a unital  $C^*$-algebra $\mathcal{B}$. Then
\begin{align*}
&\hspace{-1cm}\dfrac{1}{2} \big(\Phi (f(\rho) A^* g(\rho) A)+\Phi (f(\rho) A g(\rho) A^*) \big) \\
&\geq \Phi\left(f(\rho)^{\frac{1}{2}} g(\rho)^{\frac{1}{2}} A^* f(\rho)^{\frac{1}{2}} g(\rho)^{\frac{1}{2}} A\right)\\
&\geq \Phi \left(f(\rho) A^* g(\rho)\right) \Phi \Big(f(\rho)g(\rho)\Big)^{-1} \Phi \left(g(\rho) A f(\rho)\right)
\end{align*}
for all $\rho\in \mathcal{A}_h$ and all continuous positive real-valued functions $f$ and $g$ on the spectrum $\rho$ with $f(\rho)g(\rho)>0$ and all operators $A\in\mathcal{A}$. In particular,
\begin{align*}
{\rm I}_{\rho, {\Phi}}^{f,g}(A)\leq {\rm I}_{\rho, {\Phi}}^{\sqrt{fg},\sqrt{fg}}(A) \leq {\rm Var}_{\rho,\Phi}^{fg,1}(A)
\end{align*}
for every $A\in\mathcal{A}_h$
\end{theorem}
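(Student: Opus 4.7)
The plan is to prove the two operator inequalities in the main display separately and then extract the Wigner--Yanase--Dyson chain by specializing to $A=A^*$. Throughout, I set $H:=f(\rho)^{1/2}g(\rho)^{1/2}$; since $f(\rho)$ and $g(\rho)$ are commuting positive elements, $H=g(\rho)^{1/2}f(\rho)^{1/2}$ and $H^2=f(\rho)g(\rho)>0$. For the first inequality, I consider the element
\[
T := f(\rho)^{1/2}A^*g(\rho)^{1/2}-g(\rho)^{1/2}A^*f(\rho)^{1/2}\in\mathcal{A},
\]
for which $\Phi(T^*T)\ge 0$. Multiplying out $T^*T$ produces four terms; using $f(\rho)^{1/2}f(\rho)^{1/2}=f(\rho)$, $g(\rho)^{1/2}g(\rho)^{1/2}=g(\rho)$, and $f(\rho)^{1/2}g(\rho)^{1/2}=H$, together with one cyclic rotation in each entry, the two diagonal terms become $\Phi(f(\rho)A^*g(\rho)A)$ and $\Phi(f(\rho)Ag(\rho)A^*)$, while each of the two cross terms collapses to $\Phi(HA^*HA)$. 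Rearranging $\Phi(T^*T)\ge 0$ gives the first inequality.

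For the second inequality, I form the column operator
\[
V:=\begin{pmatrix}H^{1/2}A^*H^{1/2}\\ H\end{pmatrix}\in M_{2,1}(\mathcal{A}),
\]
so that $VV^*\ge 0$ in $M_2(\mathcal{A})$. Since tracial positive maps between $C^*$-algebras are completely positive, $\Phi$ applied entrywise to $VV^*$ yields a positive matrix in $M_2(\mathcal{B})$. Identifying the entries by cyclicity, the $(1,1)$ entry becomes $\Phi(H^{1/2}A^*HAH^{1/2})=\Phi(HA^*HA)$, the $(2,2)$ entry is $\Phi(H^2)=\Phi(f(\rho)g(\rho))>0$, and the off-diagonal entries collapse to $\Phi(H^{1/2}A^*H^{3/2})=\Phi(f(\rho)A^*g(\rho))$ and $\Phi(H^{3/2}AH^{1/2})=\Phi(g(\rho)Af(\rho))$. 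Lemma~\ref{matrixp} then delivers the second inequality. The main technical obstacle is choosing the right column: the natural guess $V_{11}=HA$ lands $\Phi(H^2A^*A)$ in the $(1,1)$ corner, and $\Phi(H^2A^*A)$ is in general incomparable with the target $\Phi(HA^*HA)$ (they are related through Theorems~\ref{posiwigner1}--\ref{reposwing}, but in the wrong direction). The asymmetric choice $V_{11}=H^{1/2}A^*H^{1/2}$ is what forces the two outer $H^{1/2}$'s to fuse via traciality into the single inside $H$ we want.

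The final ``In particular'' chain is a direct specialization to $A\in\mathcal{A}_h$. Self-adjointness of $A$ gives $\Phi(f(\rho)A^*g(\rho)A)=\Phi(f(\rho)Ag(\rho)A^*)=\Phi(f(\rho)Ag(\rho)A)$, so the averaged left-hand side of the theorem collapses to the single term $\Phi(f(\rho)Ag(\rho)A)$; and by traciality together with $[f(\rho),g(\rho)]=0$, both $\Phi(f(\rho)A^*g(\rho))$ and $\Phi(g(\rho)Af(\rho))$ equal $\Phi(f(\rho)g(\rho)A)$, which is then self-adjoint. Subtracting the resulting chain
\[
\Phi(f(\rho)Ag(\rho)A)\ \ge\ \Phi(HAHA)\ \ge\ \Phi(f(\rho)g(\rho)A)\,\Phi(f(\rho)g(\rho))^{-1}\,\Phi(f(\rho)g(\rho)A)
\]
term-by-term from the common quantity $\Phi(f(\rho)g(\rho)A^2)$ produces, on the three sides, precisely ${\rm I}_{\rho,\Phi}^{f,g}(A)$, ${\rm I}_{\rho,\Phi}^{\sqrt{fg},\sqrt{fg}}(A)$, and ${\rm Var}_{\rho,\Phi}^{fg,1}(A)$, yielding the promised chain.
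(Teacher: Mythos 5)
Your proof is correct and follows essentially the same route as the paper: the first inequality from $\Phi(T^*T)\ge 0$ with $T=f(\rho)^{1/2}A^*g(\rho)^{1/2}-g(\rho)^{1/2}A^*f(\rho)^{1/2}$ (the adjoint of the paper's $X$), and the second from the positivity of the very same $2\times 2$ matrix, which the paper verifies via its Schur complement rather than writing it as $VV^*$, followed by two-positivity and Lemma~\ref{matrixp}. Your explicit derivation of the ``in particular'' chain, which the paper leaves implicit, is also correct.
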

\begin{proof}
Let $X=g(\rho)^{\frac{1}{2}}A f(\rho)^{\frac{1}{2}} -f(\rho)^{\frac{1}{2}} A g(\rho)^{\frac{1}{2}}.$
Then we have
\begin{align*}
 0 &\leq \Phi(X^*X)\\ &= \Phi \Big(\big( f(\rho)^{\frac{1}{2}}A^* g(\rho)^{\frac{1}{2}} -g(\rho)^{\frac{1}{2}} A^* f(\rho)^{\frac{1}{2}}\big)\big(g(\rho)^{\frac{1}{2}}A f(\rho)^{\frac{1}{2}} -f(\rho)^{\frac{1}{2}} A g(\rho)^{\frac{1}{2}}\big) \Big) \\ &= \Phi \big( f(\rho)^{\frac{1}{2}}A^* g(\rho)^{\frac{1}{2}} g(\rho)^{\frac{1}{2}}A f(\rho)^{\frac{1}{2}}\big) -\Phi \big( f(\rho)^{\frac{1}{2}}A^* g(\rho)^{\frac{1}{2}} f(\rho)^{\frac{1}{2}}A g(\rho)^{\frac{1}{2}}\big)\\ & \ \ -\Phi \big( g(\rho)^{\frac{1}{2}}A^* f(\rho)^{\frac{1}{2}} g(\rho)^{\frac{1}{2}}A f(\rho)^{\frac{1}{2}}\big) + \Phi \big( g(\rho)^{\frac{1}{2}}A^* f(\rho)^{\frac{1}{2}} f(\rho)^{\frac{1}{2}}A g(\rho)^{\frac{1}{2}}\big);
\end{align*}
since $\Phi$ is tracial, we get
\begin{eqnarray*}
\Phi (f(\rho) A^* g(\rho) A)+\Phi (f(\rho) A g(\rho) A^*)\geq 2\Phi\left(f(\rho)^{\frac{1}{2}} g(\rho)^{\frac{1}{2}} A^* f(\rho)^{\frac{1}{2}} g(\rho)^{\frac{1}{2}} A\right),
\end{eqnarray*}
which implies the first inequality.
Furthermore, the matrix
\begin{eqnarray*} \begin{bmatrix} f(\rho)^{\frac{1}{4}}g(\rho)^{\frac{1}{4}}A^* g(\rho)^{\frac{1}{2}} f(\rho)^{\frac{1}{2}} A g(\rho)^{\frac{1}{4}}f(\rho)^{\frac{1}{4}} & f(\rho)^{\frac{1}{4}}g(\rho)^{\frac{1}{4}}A^* g(\rho)^{\frac{3}{4}} f(\rho)^{\frac{3}{4}} \\ g(\rho)^{\frac{3}{4}} f(\rho)^{\frac{3}{4}} A f(\rho)^{\frac{1}{4}}g(\rho)^{\frac{1}{4}} & f(\rho)g(\rho)
\end{bmatrix} \end{eqnarray*}
is positive, since
\begin{align*}
& f(\rho)^{\frac{1}{4}}g(\rho)^{\frac{1}{4}}A^* g(\rho)^{\frac{1}{2}} f(\rho)^{\frac{1}{2}} A g(\rho)^{\frac{1}{4}}f(\rho)^{\frac{1}{4}} \\
 &= f(\rho)^{\frac{1}{4}}g(\rho)^{\frac{1}{4}}A^* g(\rho)^{\frac{3}{4}} f(\rho)^{\frac{3}{4}} (f(\rho)g(\rho))^{-1} g(\rho)^{\frac{3}{4}} f(\rho)^{\frac{3}{4}} A f(\rho)^{\frac{1}{4}}g(\rho)^{\frac{1}{4}}.
\end{align*}
Using the two-positivity of $\Phi$, we assert that the matrix
\begin{eqnarray*} \begin{bmatrix} \Phi \left(f(\rho)^{\frac{1}{4}}g(\rho)^{\frac{1}{4}}A^* g(\rho)^{\frac{1}{2}} f(\rho)^{\frac{1}{2}} A g(\rho)^{\frac{1}{4}}f(\rho)^{\frac{1}{4}} \right) & \Phi \left( f(\rho)^{\frac{1}{4}}g(\rho)^{\frac{1}{4}}A^* g(\rho)^{\frac{3}{4}} f(\rho)^{\frac{3}{4}}\right) \\ \Phi \left(g(\rho)^{\frac{3}{4}} f(\rho)^{\frac{3}{4}} A f(\rho)^{\frac{1}{4}}g(\rho)^{\frac{1}{4}}\right) & \Phi \Big(f(\rho)g(\rho)\Big)
\end{bmatrix} \end{eqnarray*}
is positive. Hence, by applying Lemma \ref{matrixp} again, we arrived at the second inequality.
\end{proof}

\begin{corollary}
If $\Phi$ is a tracial positive linear map from a $C^*$-algebra $\mathcal{A}$ into a unital  $C^*$-algebra $\mathcal{B}$ and $\rho$ is a positive operator, then
\begin{eqnarray*}
 {\rm I}_{\rho, {\Phi}}^\alpha(A) \leq {\rm I}_{\rho,\Phi}^{(\frac{1}{2})}(A) \leq {\rm Var}_{\rho,\Phi}(A)
\end{eqnarray*}
for every self-adjoint $A\in\mathcal{A}$.
\end{corollary}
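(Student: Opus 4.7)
The plan is to obtain both inequalities as a direct specialization of Theorem~\ref{mogh} to the pair $f(x)=x^{1-\alpha}$, $g(x)=x^{\alpha}$. With this choice, $f(x)g(x)=x$ and $\sqrt{f(x)g(x)}=x^{1/2}$, so, using the abbreviations introduced after Definition~\ref{def} and the convention (established in the paragraph following Definition~\ref{def}) that the unadorned ${\rm Var}_{\rho,\Phi}$ corresponds to $f(x)=x$, $g(x)=1$, the three quantities appearing in the ``In particular'' part of Theorem~\ref{mogh} specialize to
\[
{\rm I}_{\rho,\Phi}^{f,g}(A)={\rm I}_{\rho,\Phi}^{\alpha}(A),\qquad {\rm I}_{\rho,\Phi}^{\sqrt{fg},\sqrt{fg}}(A)={\rm I}_{\rho,\Phi}^{(1/2)}(A),\qquad {\rm Var}_{\rho,\Phi}^{fg,1}(A)={\rm Var}_{\rho,\Phi}(A).
\]

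Before invoking the theorem, I would verify its hypothesis $f(\rho)g(\rho)>0$, which under this specialization reduces to $\rho>0$. When $\rho$ is strictly positive this is immediate; when $\rho$ is merely positive, a routine perturbation $\rho\mapsto\rho+\varepsilon I$ followed by $\varepsilon\to 0^{+}$ extends the resulting inequalities by continuity, since both sides depend continuously on $\rho$ through the continuous functional calculus. Plugging $f$ and $g$ into the chain in Theorem~\ref{mogh} then yields exactly the required
\[
{\rm I}_{\rho,\Phi}^{\alpha}(A)\le {\rm I}_{\rho,\Phi}^{(1/2)}(A)\le {\rm Var}_{\rho,\Phi}(A).
\]

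There is essentially no obstacle: the corollary is a direct transcription of Theorem~\ref{mogh} under the substitution above, and the only mild subtlety is the possible non-invertibility of $\rho$, which is absorbed by the standard approximation just mentioned. I would therefore present the proof as a single short paragraph, reading off Theorem~\ref{mogh} for the stated $f$ and $g$ and invoking the continuity argument if $\rho$ is not assumed strictly positive.
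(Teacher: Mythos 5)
Your proposal is correct and follows essentially the same route as the paper: the authors also obtain the corollary by specializing Theorem~\ref{mogh} to the power functions (they take $f(x)=x^{\alpha}$, $g(x)=x^{1-\alpha}$, which by the symmetry of the theorem is equivalent to your choice) and reading off the two inequalities termwise. Your additional remark on handling non-invertible $\rho$ by perturbation is a reasonable piece of care that the paper omits, but it does not change the argument.
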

\begin{proof}
Using Theorem \ref{mogh} for $f(x)= x^\alpha$ and $g(x)=x^{1-\alpha}$, we get
\begin{align*}
 {\rm I}_{\rho, \phi}^\alpha(A) &=\Phi(\rho A^2)-\Phi(\rho^{\alpha}A\rho^{1-\alpha}A) \\ &\leq \Phi(\rho A^2)-\Phi(\rho^{\frac{\alpha}{2}}\rho^{\frac{1-\alpha}{2}}A\rho^{\frac{\alpha}{2}}\rho^{\frac{1-\alpha}{2}}A) \\ & \tag{by the first inequality in Theorem \ref{mogh}} \\ &= \Phi(\rho A^2)-\Phi(\rho^{\frac{1}{2}}A\rho^{\frac{1}{2}}A) ={\rm I}_{\rho,\Phi}^{(\frac{1}{2})}(A) \\ &\leq
 \Phi(\rho A^2)- \Phi(\rho^{\alpha} A \rho^{1-\alpha}) \Phi(\rho)^{-1} \Phi(\rho^{\alpha} A \rho^{1-\alpha})\\ & \tag{by the second inequality in Theorem \ref{mogh}}\\ &= \Phi(\rho A^2)- \Phi(\rho A) \Phi(\rho)^{-1} \Phi(\rho A) = {\rm Var}_{\rho,\Phi}(A).
\end{align*}
\end{proof}

\end{document}